\newcommand{\Ad}{\operatorname{Ad}}
\newcommand{\id}{\operatorname{id}} 
\newcommand{\Aut}{\operatorname{Aut}}
\newcommand{\diag}{\operatorname{diag}}
\newcommand{\co}{\operatorname{co}}
\newcommand{\Span}{\operatorname{Span}}
\newcommand{\Tr}{\operatorname{Tr}}
\newcommand{\Per}{\operatorname{Per}}
 \newcommand{\supp}{\operatorname{supp}}
\newcommand{\Is}{\operatorname{Is}}
\newcommand{\Sp}{\operatorname{Sp}}
\newcommand{\Ro}{\operatorname{RO}}
   \theoremstyle{plain}
   \newtheorem{thm}{Theorem}[section]
   \newtheorem{prop}[thm]{Proposition}
   \newtheorem{lemma}[thm]{Lemma}  
   \newtheorem{cor}[thm]{Corollary}
   \theoremstyle{definition}
   \newtheorem{defn}[thm]{Definition}
   \newtheorem{example}[thm]{Example}
   \theoremstyle{remark}
   \newtheorem{remark}[thm]{Remark}
\newtheorem{assumption}[thm]{Assumption}
\definecolor{mybgcolor}{gray}{0.8}
\definecolor{myframecolor}{rgb}{.647,.129,.149}
\newmdenv[style=mystyle]{important}
   \numberwithin{equation}{section}
        \date{\today}
\title[Factor types and ground states]{}
\date{\today}
\email{matkt@math.au.dk}
\address{Department of Mathematics, Aarhus University, Ny Munkegade, 8000 Aarhus C, Denmark}
\begin{document}

\maketitle

\begin{center}
\begin{large} {\bf KMS weights on graph $C^*$-algebras II}
\end{large}
\end{center}

\begin{center}
 {\bf Factor types and ground states}
\end{center}

\bigskip

\begin{center}
Klaus Thomsen
\end{center}

\section{Introduction}

The study of KMS states on graph $C^*$-algebras has a long
story which began well before the notion of a graph $C^*$-algebra
was coined, starting perhaps with the paper \cite{PS} by Powers and
Sakai, or the paper \cite{OP} by Olesen and Pedersen. For finite
graphs we have now a complete description of all KMS states for generalized
gauge actions, thanks to the efforts of many mathematicians. See \cite{
  aHLRS} and \cite{CT} for the most recent and most comprehensive account. It
seems therefore now time to consider gauge actions and generalised
gauge actions on graph $C^*$-algebras of infinite graphs, and the first
investigations in this direction was performed by Carlsen and Larsen
in \cite{CL}, building on the closely related work by Exel and Laca in
\cite{EL}, and by the author in \cite{Th3}, \cite{Th4},
\cite{Th5}. The two approaches are very different and complement each
other nicely.

The present work is a continuation of \cite{Th5} where the gauge
invariant KMS weights
of generalised gauge actions were
investigated, culminating in a complete description of the KMS weights
for the gauge
action on the $C^*$-algebra of a strongly connected graph with at most countably many
exits. The structure turned out to be very rich; in fact, to a agree
comparable with the structure constructed more than 30 years ago by Bratteli, Elliott and
Kishimoto, \cite{BEK}, based on the classification of
AF-algebras. However, the factor types of the extremal KMS weights
identified in \cite{Th5} were not determined, nor was the ground states of
the systems obtained by restricting the gauge actions to corners given
by vertexes in the graph. It is the purpose of the present paper
to provide this information. At the same time we make an effort to
obtain results valid for actions more complicated than the gauge
action. It will be shown by example that by considering actions other than the
gauge action, the variety of factor types becomes much
greater.

It follows from \cite{Th5} that there are three kinds of extremal KMS
weights for the gauge action when the graph is strongly connected and only has countably
many exits: Boundary KMS weights
coming from infinite emitters in the graph, a conservative KMS weight
coming from a positive eigenvector for the adjacency matrix of the
graph with the smallest possible eigenvalue, and dissipative KMS weights coming
from exits in the graph. The three types are not always all present, but when the graph has at
most countably many exits they comprise all the extremal KMS
weights. The factor type of the extremal boundary KMS weights are
invaribly $I_{\infty}$, for all generalized gauge actions, and we focus therefore on the two other kinds of KMS weights since they exhibit a much more
varied behaviour. In Section \ref{conservative} we determine the factor type
of the essentially unique conservative KMS weight for the gauge action
which exists when
the graph is recurrent with finite Gurevich entropy. The method of
proof is taken from \cite{Th3}, but it is only successful because of the
conservative nature of the corresponding measure which was established
in \cite{Th5}. The result is that the factor type of this weight is
determined by the global period $d_G$ and the Gurevich entropy $h(G)$
of the graph; more precisely its factor 
type is $III_{\lambda}$ where $\lambda = e^{-d_Gh(G)}$. In Section \ref{exits} we determine the
factor types of the extremal KMS weights for the gauge action coming from exits in the
graph, and they turn out to be of type $I_{\infty}$ when the
associated measure on the space of infinite paths in the graph is
atomic, and of type $II_{\infty}$ when it is not atomic. 

Like the results, the methods used in Section \ref{conservative} and
Section \ref{exits} are very different. Furthermore, the methods extend
to other actions, and in Section \ref{exits}
we can in fact handle all generalized gauge actions. The key lemma
here is valid for all these actions and
shows how the factor arising from an exit in the graph is closely
related to an Araki-Woods factor naturally defined from the exit. The method and results of Section
\ref{conservative} extend also well beyond the gauge action, and
we pursue this higher generality because it appears that the
variation of factor types becomes much richer for more general
actions. 

In the last section we describe the ground states of a generalized gauge action
restricted to a corner defined from a vertex in the graph which no
longer needs to be strongly connected. 
The method
is an adaptation of a method from \cite{Th2}, and as often before (e.g.
in \cite{LR}, \cite{Th2}, \cite{CL} and \cite{Ka}) the
ground states turn out to be parametrised by the states of a
$C^*$-algebra which in this case is a sub-quotient of the graph $C^*$-algebra
determined by the data used to define the one-parameter group. Its
structure can be quite complicated. We exhibit an example of a strongly connected row-finite
graph where the ground states of the gauge
action restricted to a corner are parametrised by the state space of
the CAR algebra. The ground states for generalized gauge actions on the graph
$C^*$-algebra itself were identified, under mild conditions, by Carlsen and Larsen in
\cite{CL}. There are considerably fewer of those, and in fact none unless the
graph has sinks or infinite emitters; at least not when the function
on the edges defining the action is strictly positive. In contrast, on a corner of the
graph $C^*$-algebra given by a vertex in a strongly connected infinite
graph,
there are always at least one ground state for the gauge action.

\section{Preparations}

Let $G$ be a directed graph with vertexes $V$ and edges $E$. We assume that $G$ is countable in the sense that $V$
and $E$ are both countable sets. We let $r$ and $s$ denote the maps
$r: E \to V$ and $s: E \to V$ which associate to an edge $e\in E$ its
target vertex $r(e)$ and source vertex $s(e)$, respectively. A vertex
$v$ is an
\emph{infinite emitter} when $s^{-1}(v)$ contains infinitely many
edges and a \emph{sink} when $s^{-1}(v)$ is empty. The union of
sinks and infinite emitters constitute a set which will be denoted by
$V_{\infty}$. Except for
the last section on ground states we will only consider graphs that are \emph{
strongly connected} in the sense that for all vertexes $v,w \in V$ there is a finite path
in $G$ which starts a $v$ and ends at $w$. In particular, there are no
sinks. The graph $C^*$-algebra
$C^*(G)$ is by definition the universal
$C^*$-algebra generated by a collection $S_e, e \in E$, of partial
isometries and a collection $P_v, v \in V$, of orthogonal projections subject
to the conditions that
\begin{enumerate}
\item[1)] $S^*_eS_e = P_{r(e)}, \ \forall e \in E$,
\item[2)] $\sum_{e \in F} S_eS_e^* \leq P_v$ for every finite subset
  $F \subseteq s^{-1}(v)$ and all $v\in V$, and
\item[3)] $P_v = \sum_{e  \in s^{-1}(v)} S_eS_e^*, \ \forall v \in V
  \backslash V_{\infty}$.
\end{enumerate} 
The generalised gauge actions we consider can be defined using the
universal property. Let $F : E \to \mathbb R$ be any map. There is a
one-parameter group $\alpha^F = \left(\alpha^F_t\right)_{t \in \mathbb
  R}$ on $C^*(G)$ defined by the requirements
that
$$
\alpha^F_t(P_v) = P_v \ \text{and} \ \alpha^F_t(S_e) = e^{iF(e)t}S_e
$$
for all vertexes $v$, all edges $e$ and all real numbers $t$.

For the present purpose it is crucial that $C^*(G)$ can also be
realised as the (reduced) $C^*$-algebra of an \'etale groupoid
$\mathcal G$ constructed by A. Paterson in \cite{Pa}. Let $P_f(G)$ and
$P(G)$ denote the set of finite and infinite paths in $G$,
respectively. The range and source maps, $r$ and $s$, extend in the
natural way to $P_f(G)$; the source map also to $P(G)$. A vertex $v
\in V$ will be considered as a finite path of length $0$ and we set
$r(v) = s(v) =v$ when $v$ is considered as an element of $P_f(G)$. The unit space
$\Omega_G$ of $\mathcal G$ is the union
$\Omega_G = P(G) \cup Q(G)$,
where 
$$
Q(G) = \left\{p \in P_f(G): \ r(p) \in V_{\infty} \right\} 
$$ 
is the set of finite paths that terminate at a vertex
in $V_{\infty}$. In particular, $V_{\infty} \subseteq Q(G)$ because
vertexes are considered to be finite paths of length $0$. For any $p
\in P_f(G)$, let $|p|$ denote the length of $p$. When $|p| \geq 1$, set
$$
Z(p) = \left\{ q \in \Omega_G: \ |q| \geq |p| , \ q_i = p_i, \ i = 1,2,
  \cdots, |p| \right\},
$$
and
$$
Z(v) = \left\{ q \in \Omega_G : \ s(q) = v\right\}
$$
when $v \in V$. When $\nu \in P_f(G)$ and $F$ is a finite subset of $P_f(G)$, set
\begin{equation}\label{a6}
Z_F(\nu) = Z(\nu) \backslash \left(\bigcup_{\mu \in F} Z(\mu)\right) .
\end{equation}
The sets $Z_F(\nu)$ form a basis of compact and open subsets for a locally compact Hausdorff
topology on $\Omega_G$. When $\mu \in P_f(G)$ and $  x \in \Omega_G$, we can define the
concatenation $\mu x \in \Omega_G $ in the obvious way when $r(\mu) =
s(x)$. The groupoid $\mathcal G$ consists of the
elements in $\Omega_G \times \mathbb Z \times \Omega_G$ of the form
$$
(\mu x, |\mu| - |\mu'|, \mu'x),
$$
for some $x\in \Omega_G$ and some $ \mu,\mu' \in P_f(G)$. The product
in $\mathcal G$ is defined by
$$
(\mu x, |\mu| - |\mu'|, \mu' x)(\nu y, |\nu| -|\nu'|, \nu' y) = (\mu
x, \ |\mu | + |\nu| - |\mu'| - |\nu'|, \nu' y),
$$ 
when $\mu' x = \nu y$, and the involution by $(\mu x, |\mu| - |\mu'|,
\mu'x)^{-1} = (\mu' x, |\mu'| - |\mu|, \mu x)$. To describe the
topology on $\mathcal G$, let $Z_{F}(\mu)$ and $Z_{F'}(\mu')$ be two
sets of the form (\ref{a6}) with $r(\mu) = r(\mu')$. The topology we
shall consider has as a basis the sets of the form
\begin{equation}\label{top}
\left\{ (\mu x, |\mu| - |\mu'|, \mu' x) : \ \mu x \in Z_F(\mu), \
  \mu'x \in Z_{F'}(\mu') \right\} .
\end{equation}
With this topology $\mathcal G$ becomes an \'etale locally compact Hausdorff groupoid and we can consider the reduced $C^*$-algebra $C^*_r(\mathcal
G)$ as in
\cite{Re1}. As shown by Paterson in
\cite{Pa} there is an isomorphism $C^*(G) \to
  C^*_r(\mathcal G)$ which sends $S_e$ to $1_e$, where $1_e$ is the
  characteristic function of the compact and open set
$$
\left\{ (ex, 1, r(e)x) : \ x \in \Omega_G \right\} \ \subseteq \
\mathcal G,
$$  
and $P_v$ to $1_v$, where $1_v$ is the characteristic function of the
compact and open set
$$
\left\{ (vx,0,vx) \ : \ x \in \Omega_G \right\} \ \subseteq \ \mathcal G.
$$ 
In the following we use the identification $C^*(G) = C_r^*(\mathcal
G)$ and identify $\Omega_G$ with the unit space of $\mathcal G$ via
the embedding
$\Omega_G \ni x \ \mapsto \ ( x, 0,x)$.

By describing the generalised gauge action $\alpha^F$ in the groupoid
picture, it is seen that it is a special case of
actions considered by Renault in \cite{Re1}. Specifically we extend $F$ to a function
$F : P_f(G) \to \mathbb R$ such that 
$$
F(p_1p_2\cdots p_n) = \sum_{i=1}^n F(p_i)
$$
when $p = p_1p_2 \cdots p_n$ is a path of length $n \geq 1$ in $G$, and
$F(v) = 0$ when $v \in V$. We can
then define a continuous function $c_{F} : \mathcal G \to \mathbb R$
such that
$$
c_{F}(ux,|u|-|u'|,u'x) = F(u)- F(u').
$$
Since $c_{F}$ is a continuous homomorphism it gives rise to a continuous
one-parameter automorphism group $\alpha^F$ on $C^*_r(\mathcal G)$
defined such that
$$
\alpha^{F}_t(f)(\gamma) = e^{it c_{F}(\gamma)} f(\gamma)
$$
when $f \in C_c(\mathcal G)$, cf. \cite{Re1}. When $F$ is constant
$1$ this action is known as \emph{the gauge action} on $C^*(G)$ and we
denote it by $\gamma$.

Recall, \cite{KV1}, \cite{Th3}, that a weight $\psi$ on
the $C^*$-algebra $A$ is \emph{proper} when it is non-zero, densely defined and lower
semi-continuous. For such a weight, set
 $$
\mathcal N_{\psi} = \left\{ a \in A: \ \psi(a^*a) < \infty
\right\}.
$$ 
Let $\alpha : \mathbb R \to \Aut A$ be a point-wise
norm-continuous one-parameter group of automorphisms on
$A$. Let $\beta \in \mathbb R$. Following Combes, \cite{Co}, we say that a proper weight
$\psi$ on $A$ is a \emph{$\beta$-KMS
  weight} for $\alpha$ when
\begin{enumerate}
\item[i)] $\psi \circ \alpha_t = \psi$ for all $t \in \mathbb R$, and
\item[ii)] for every pair $a,b \in \mathcal N_{\psi} \cap \mathcal
  N_{\psi}^*$ there is a continuous and bounded function $F$ defined on
  the closed strip $D_{\beta}$ in $\mathbb C$ consisting of the numbers $z \in \mathbb C$
  whose imaginary part lies between $0$ and $\beta$, and is
  holomorphic in the interior of the strip and satisfies that
$$
F(t) = \psi(a\alpha_t(b)), \ F(t+i\beta) = \psi(\alpha_t(b)a)
$$
for all $t \in \mathbb R$. \footnote{As in \cite{Th1} we apply the
  definition from \cite{C} for the action $\alpha_{-t}$
  in order to use the same sign convention as in \cite{BR}, for example.}
\end{enumerate}  
A $\beta$-KMS weight $\psi$ on $A$ is \emph{extremal} when the only
$\beta$-KMS weights $\varphi$ on $A$ with the property that
$\varphi(a^*a) \leq \psi(a^*a)$ for all $a \in A$ are the scalar
multiples of $\psi$, viz. $\varphi = t\psi$ for some $t > 0$.

It was shown in \cite{Th3} that all gauge invariant KMS weights $\psi$
for $\alpha^F$ are given by a
regular Borel measure $m$ on $\Omega_G$ in the sense that
\begin{equation}\label{measure}
\psi(a) = \int_{\Omega_G} P(a) \ dm,
\end{equation}
where $P : C^*(\mathcal G) \to C_0(\Omega_G)$ is the canonical
conditional expectation, \cite{Re1}. Under a certain condition on $F$,
spelled out in (\ref{nov30}) below, the KMS weights for
$\alpha^F$ are automatically gauge invariant and hence they all come
from measures on $\Omega_G$. A measure $m$ corresponding to a
$\beta$-KMS measure via (\ref{measure}) will be called a
\emph{$\beta$-KMS measure}. The measure associated to an extremal KMS
weight is either supported on $P(G)$, in which case it is
\emph{harmonic}, or on $Q(G)$ and it is then a \emph{boundary} KMS
weight. See \cite{CL} and \cite{Th5} for more on this dichotomy.

Given a weight $\psi$ on a $C^*$-algebra $A$ there is a GNS-type
construction consisting of a Hilbert space $H_{\psi}$, a linear map
$\Lambda_{\psi} : \mathcal N_{\psi} \to H_{\psi}$ with dense range and
a non-degenerate representation $\pi_{\psi}$ of $A$ on $H_{\psi}$ such that
\begin{enumerate}
\item[$\bullet$] $\psi(b^*a) = \left<
    \Lambda_{\psi}(a),\Lambda_{\psi}(b)\right>, \ a,b \in \mathcal
  N_{\psi}$, and
\item[$\bullet$] $\pi_{\psi}(a)\Lambda_{\psi}(b) = \Lambda_{\psi}(ab), \
  a \in A, \ b \in \mathcal N_{\psi}$,
\end{enumerate} 
cf. \cite{KV1},\cite{KV2}.
 As
observed in Lemma 4.4 in \cite{Th3} the von Neumann algebra
$\pi_{\psi}(A)''$ is a factor when $\psi$ is extremal, and the von
Neumann algebra type of $\pi_{\psi}(A)''$ is the \emph{factor type} of
$\psi$.

 As mentioned in the introduction, the extremal boundary KMS
weights of a generalized gauge action on a graph $C^*$-algebra are of factor type
$I_{\infty}$, at least when the graph is strongly connected. This can be seen for example by showing that
the factor in question contains an abelian projection and can not be
finite, in much the same way as in the proof of Proposition 5.1 in
\cite{Th2}. We leave the details to the reader and focus here on the extremal harmonic KMS weights.

\section{The conservative KMS weight}\label{conservative}

When the graph $G$ is strongly connected and recurrent, and the
Gurevich entropy $h(G)$ of $G$ is finite, there is a $h(G)$-KMS weight $\psi$
for the gauge action on $C^*(G)$ which is unique up to multiplication
by scalars, cf. \cite{EFW} for the finite case and Proposition 4.16 in
\cite{Th5} for the infinite case. In particular $\psi$ is extremal and in this
section we determine its factor type by using the method from
Section 4.2 of \cite{Th3}. The conservative nature on the measure on
$P(G)$ given by the weight is the crucial additional information which
will allow us to be more conclusive than in \cite{Th3}.

 Let $\psi$ be an extremal $\beta$-KMS weight for
$\alpha^F$. It follows from Section 2.2 in \cite{KV1} that $\psi$ extends to a normal semi-finite faithful weight
$\tilde{\psi}$ on $M = \pi_{\psi}\left(C^*(G)\right)''$ such that $\psi =
\tilde{\psi} \circ \pi_{\psi}$, and that the modular group on
$M$ corresponding to $\tilde{\psi}$ is the
one-parameter group $\theta$ given by
$$
\theta_t = \tilde{\alpha}^F_{-\beta t},
$$
where $\tilde{\alpha}^F$ is the
$\sigma$-weakly continuous extension of $\alpha^F$ defined such that
$\tilde{\alpha}^F_t \circ \pi_{\psi} = \pi_{\psi} \circ {\alpha}^F_t$. To
simplify the notation in the following, let $N \subseteq M$ be the fixed point
algebra of $\theta$, viz. $N = M^{\theta}$, and suppress the
representation $\pi_{\psi}$ so that, in particular, $1_v \in
C_c(\mathcal G)$ will now also denote the projection
$\pi_{\psi}(1_v)\in M$. For $f \in L^1(\mathbb R)$ define a linear map
$\sigma_f : M \to M$ such that
$$
\sigma_f(a) = \int_{\mathbb R} f(t) \theta_t(a) \ dt.
$$  
For every vertex $v \in V$ and every central projection $e$ in
$1_vN1_v$, set
$$
\Sp (eMe) = \bigcap \left\{ Z(f) : \ f \in L^1(\mathbb R), \
  \sigma_f(eMe) = \{0\} \right\} ,
$$
where
$$
Z(f) = \left\{ r \in \mathbb R : \ \int_{\mathbb R} e^{itr} f(t) \ dt
  \ = \ 0 \right\} .
$$
Then the invariant $\Gamma(M)$ introduced by Connes, \cite{C}, can be
expressed as the intersection
$$
\Gamma(M) = \bigcap_{e} \Sp (eMe),
$$
where we take the intersection over
all non-zero central projections $e$ in $1_vN1_v$. In particular, the
intersection does not depend on which vertex $v$ we use.

Pick a vertex $v$ in $G$ and note that  
$$
 \left\{ \beta F(\mu) - \beta F(\mu'): \ \mu, \mu' \in P_f(G), \ r(\mu) =
  r(\mu') = s(\mu) = s(\mu') = v \right\} 
$$
is a subgroup of $\mathbb R$ which does not depend on the vertex $v$
since $G$ is strongly connected. Let $R_{G,F}$ be the closure in $\mathbb
R$ of this subgroup.

\begin{lemma}\label{nov21a} Assume that $G$ is strongly connected. Let $\psi$ be an extremal $\beta$-KMS
  weight for $\alpha^F$. Then $ \pi_{\psi}(C^*(G))''$ is a hyperfinite
  factor and
\begin{equation}\label{nov21}
\Gamma\left( \pi_{\psi}(C^*(G))''\right) \subseteq R_{G,F}.
\end{equation}

\end{lemma}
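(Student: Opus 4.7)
I would treat the two assertions of the lemma independently.

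For hyperfiniteness, the Paterson groupoid $\mathcal{G}$ is amenable, so $C^*(G) = C^*_r(\mathcal{G})$ is nuclear. Consequently $M = \pi_\psi(C^*(G))''$ is an injective von Neumann algebra, and being a factor (by the Lemma 4.4 of \cite{Th3} recalled above) acting on the separable Hilbert space $H_\psi$, it is hyperfinite by Connes' theorem on injective factors.

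For the inclusion $\Gamma(M) \subseteq R_{G,F}$, I would fix any vertex $v \in V$ and reduce the problem to a spectral statement about the corner $1_v M 1_v$. For every non-zero central projection $e \in 1_v N 1_v$ the corner $eMe$ is a $\theta$-invariant $*$-subalgebra of $1_v M 1_v$, so $\Sp(eMe) \subseteq \Sp(\theta, 1_v M 1_v)$, and intersecting over such $e$ the task reduces to showing
\begin{equation*}
\Sp(\theta, 1_v M 1_v) \subseteq R_{G,F}.
\end{equation*}

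The heart of the argument is to exhibit $1_v M 1_v$ as $\sigma$-weakly generated by $\theta$-eigenvectors whose eigenvalues lie in $R_{G,F}$. By a partition of unity, every element of $1_v C_c(\mathcal{G}) 1_v$ is a finite sum of functions supported in basic open bisections of the form (\ref{top}) with $s(\mu) = s(\mu') = v$ and $r(\mu) = r(\mu')$. On such a bisection the cocycle $c_F$ is the constant $F(\mu) - F(\mu')$, so any $f$ supported there satisfies
\begin{equation*}
\theta_t(f) \;=\; \alpha^F_{-\beta t}(f) \;=\; e^{-i\beta t(F(\mu) - F(\mu'))} f,
\end{equation*}
making $f$ a $\theta$-eigenvector with eigenvalue $-\beta(F(\mu) - F(\mu')) \in R_{G,F}$. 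Since $\pi_\psi(1_v C_c(\mathcal{G}) 1_v)$ is $\sigma$-weakly dense in $1_v M 1_v$, the linear span of such eigenvectors is $\sigma$-weakly dense in $1_v M 1_v$. A standard Arveson spectral calculation then identifies $\Sp(\theta, 1_v M 1_v)$ with the closure of the set of these eigenvalues, which lies in $R_{G,F}$ by the very definition of $R_{G,F}$ as a closed subgroup.

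The step needing the most care is this last one: verifying that a $\sigma$-weakly dense $*$-subalgebra of $\theta$-eigenvectors actually pins down the Arveson spectrum of the enveloping von Neumann algebra, and that the passage from the eigenvalue set to its closure is harmless precisely because $R_{G,F}$ is closed by definition. This is the method of Section 4.2 of \cite{Th3}, and the rest of the argument is bookkeeping with the groupoid picture.
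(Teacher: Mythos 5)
Your proposal is correct and follows essentially the same route as the paper: hyperfiniteness from nuclearity, reduction of $\Gamma(M)$ to the Arveson spectrum of $1_vM1_v$, and the observation that $1_vC_c(\mathcal G)1_v$ is spanned by $\theta$-eigenvectors supported on bisections with eigenvalues $\beta(F(\mu')-F(\mu))\in R_{G,F}$, so that any $f\in L^1(\mathbb R)$ with $\hat f$ vanishing on the closed subgroup $R_{G,F}$ satisfies $\sigma_f(1_vM1_v)=\{0\}$. The paper phrases this contrapositively by fixing $r\notin R_{G,F}$ and one such $f$ with $\hat f(r)\neq 0$, but the content is identical.
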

\begin{proof}  $M = \pi_{\psi}(C^*(G))''$ is hyperfinite because $C^*(G)$
  is nuclear. We show that 
$$
\mathbb R \backslash R_{G,F} \ \subseteq \ \mathbb R \backslash
\Gamma(M).
$$
Let $r \in \mathbb R \backslash R_{G,F}$ and choose a function $f \in
L^1(\mathbb R)$ whose Fourier transform $\hat{f}$ satisfies that
$\hat{f}(t) = 0$ for all $t \in R_{G,F}$ and $\hat{f}(r) \neq
0$. Assume that
$h \in C_c(\mathcal G)$ is supported on a set of the form
\begin{equation}\label{nov32a}
\left\{(\mu x, |\mu|-|\mu'|,\mu'x) : \ x \in \Omega_G \right\} ,
\end{equation}
where $\mu, \mu' \in P_f(G)$ satisfy that $s(\mu) = s(\mu') = v$ and
$r(\mu) = r(\mu')$. Then
$$
\sigma_f(h) = \int_{\mathbb R} f(t) \theta_t(h) \ dt =
\hat{f}(\beta(F(\mu') - F(\mu)) h = 0
$$
because $\beta(F(\mu') - F(\mu)) \in R_{G,F}$. Note that elements of
the form $1_vh1_v$ span a strongly dense subspace of $1_vM1_v$ and
conclude that $\sigma_f(1_vM1_v) = \{0\}$. Since $\hat{f}(r) \neq 0$
we conclude that $r \notin \Gamma(M)$.

\end{proof}


As in \cite{Th5} we introduce the matrix $A(\beta)$ over $V$ such that
\begin{equation}\label{matrix}
A(\beta)_{vw} \ = \  \sum_{e \in s^{-1}(v) \cap r^{-1}(w)} e^{-\beta
  F(e)} .
\end{equation}
It was shown in \cite{Th5} that there are no gauge invariant $\beta$-KMS weights for
$\alpha^F$ unless all powers of $A(\beta)$ are finite and $\limsup_n
  \left(A(\beta)^n_{vv}\right)^{\frac{1}{n}} \leq 1$ for one and
  hence all vertexes $v \in V$. When these conditions hold and
  $A(\beta)$ is $1$-recurrent, in the sense that
$$
\sum_{n=0}^{\infty} A(\beta)^n_{vv} = \infty ,
$$
there is a unique ray of gauge invariant $\beta$-KMS weights for the action
$\alpha^F$. For $G$ finite this is a result of Enomoto, Fujii and
Watatani, \cite{EFW}, when $F =1$, and a result of Exel and Lace when
$F$ is either strictly positive or strictly negative, \cite{EL}. The
general case follows from Theorem 4.14 and
Proposition 4.16 in \cite{Th5}. Furthermore, it was shown that these KMS weights are harmonic and that the
associated measure on $P(G)$ is conservative. The
following result determines their factor type under a
certain condition on $F$.

\begin{thm}\label{nov9} Assume that $G$ is strongly connected and that
  $F$ satisfies the following condition:
\begin{equation}\label{nov30}
\nu \in P_f(G), \ |\nu| \geq 1, \ s(\nu) = r(\nu) \ \Rightarrow \
F(\nu) \neq 0 .
\end{equation}
Let $\psi$ be a $\beta$-KMS weight
  for $\alpha^F$, $\beta \neq 0$, whose associated measure is
  supported and
  conservative on $P(G)$. Then $ \pi_{\psi}(C^*(G))''$ is a hyperfinite factor and
$$
\Gamma( \pi_{\psi}(C^*(G))'') = R_{G,F} .
$$ 
\end{thm}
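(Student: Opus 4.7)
The plan is to split the theorem into the two inclusions $\Gamma(M) \subseteq R_{G,F}$ and $R_{G,F} \subseteq \Gamma(M)$ where $M = \pi_{\psi}(C^*(G))''$. The first inclusion, together with hyperfiniteness and factoriality of $M$, is exactly Lemma~\ref{nov21a}, once one notes that the hypotheses on $\psi$ force extremality via the uniqueness-up-to-scalars of harmonic conservative $\beta$-KMS weights recorded in Proposition~4.16 of \cite{Th5}. It remains to establish $R_{G,F} \subseteq \Gamma(M)$.

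Fix $r \in R_{G,F}$, a vertex $v$, a non-zero central projection $e$ in $1_v N 1_v$ (with $N = M^{\theta}$), and $f \in L^1(\mathbb R)$ satisfying $\sigma_f(eMe) = \{0\}$; the target is $\hat f(r) = 0$. Arguing as in the proof of Lemma~\ref{nov21a}, for any pair of finite paths $\mu,\mu'$ with $s(\mu) = s(\mu') = v$ and $r(\mu) = r(\mu')$, and for any $h \in C_c(\mathcal G)$ supported in the bisection $\{(\mu x, |\mu|-|\mu'|, \mu' x) : x \in \Omega_G\}$, the element $h$ is an eigenvector of $\theta$ with eigenvalue $-\beta(F(\mu)-F(\mu'))$, and since $e \in N$ so is $ehe$; consequently
\[
\sigma_f(ehe) \ = \ \hat f\bigl(\beta(F(\mu')-F(\mu))\bigr)\, ehe .
\]
Hence if for each target value $s \in R_{G,F}$ and each $\epsilon > 0$ such a loop pair can be produced with $|\beta(F(\mu')-F(\mu)) - s| < \epsilon$ and $ehe \neq 0$, then $\hat f$ vanishes on a dense subset of $R_{G,F}$, hence on all of $R_{G,F}$ by continuity, so $\hat f(r) = 0$.

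The main obstacle is therefore this density/non-vanishing statement. In the groupoid picture $e$ corresponds to an $m$-measurable subset $E \subseteq Z(v)$ saturated for the equivalence relation induced on $Z(v)$ by $\mathcal G_0 := c_F^{-1}(0) \cap \mathcal G$, and, taking $h$ to be the characteristic function of a suitable compact open subset of the bisection, the condition $ehe \neq 0$ translates to the positivity statement $m\bigl(E \cap \sigma_{\mu,\mu'}(E)\bigr) > 0$, where $\sigma_{\mu,\mu'} : \mu' x \mapsto \mu x$ is the partial homeomorphism attached to the bisection. The conservativeness of $m$ on $P(G)$ is precisely the input needed to run a Poincar\'e-type recurrence argument: it ensures that $m$-almost every point of $E$ returns to $E$ infinitely often under the $\mathcal G$-equivalence relation, that is, through concatenation by finite loops based at $v$. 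Strong connectedness of $G$, together with condition~(\ref{nov30}), then provide enough flexibility in the choice of such returning loops that their $F$-differences can be made to approximate any prescribed value in $R_{G,F}$. This is the step which the method of Section~4.2 of \cite{Th3} could not close without the conservativeness hypothesis, and it is the decisive use of the added information that the KMS measure is conservative.

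Once the approximating loops have been produced, $\hat f(r) = 0$ follows as indicated, so $r \in \Sp(eMe)$; since $r$ and $e$ were arbitrary, $R_{G,F} \subseteq \bigcap_{e} \Sp(eMe) = \Gamma(M)$, which together with Lemma~\ref{nov21a} yields $\Gamma(M) = R_{G,F}$.
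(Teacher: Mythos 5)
Your overall architecture is the right one and matches the paper's: the inclusion $\Gamma(M)\subseteq R_{G,F}$ together with hyperfiniteness comes from Lemma~\ref{nov21a} (plus the observation that (\ref{nov30}) forces gauge invariance of all KMS weights, so that uniqueness from \cite{Th5} gives extremality and hence factoriality), and the reverse inclusion is to be obtained by exhibiting, for each non-zero central projection $q$ in $1_vN1_v$, non-zero elements of $qMq$ that are eigenvectors of the modular group with eigenvalues filling out a dense subset of $R_{G,F}$. Your reduction via $\sigma_f$ and $\hat f$ is equivalent to the paper's appeal to Lemme 2.3.6 of \cite{C}.

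The gap is in the step you label the ``main obstacle,'' and it is not closed by the recurrence argument you gesture at. Two problems. First, you identify the central projection $e\in 1_vN1_v$ with (the indicator of) a measurable subset $E\subseteq Z(v)$; this needs justification, and the paper deliberately avoids it by only \emph{approximating} $q$ in the $2$-norm $\|a\|_v=\sqrt{\omega(a^*a)}$ by a genuine cylinder-sum projection. Second, and more seriously: $E$ is saturated only for the subgroupoid $\ker c_F\cap\mathcal G$, whereas the recurrence furnished by conservativity takes place in the full groupoid $\mathcal G$ --- indeed, by hypothesis (\ref{nov30}) every return loop $\nu$ at $v$ has $F(\nu)\neq 0$, so the return $(\nu x,|\nu|,x)$ is \emph{never} in $\ker c_F$. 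Hence ``$m$-almost every point of $E$ returns to $E$'' does not follow from conservativity plus $\ker c_F$-invariance of $E$, and you cannot conclude $m\bigl(E\cap\sigma_{\mu,\mu'}(E)\bigr)>0$ this way. The paper's mechanism is different and quantitative: conservativity enters only through Theorem 4.11 of \cite{Th5} (the measure is carried by paths passing through $v$ infinitely often), which permits approximating $q$ in $\|\cdot\|_v$ by a projection $p=\sum_{i=1}^M 1_{Z(\nu_i)}$ with each $\nu_i$ a loop at $v$ (after first averaging into $C_c(\mathcal R)$ and applying the conditional expectation onto $C(Z(v))$, which is where (\ref{nov30}) is used to make $\mathcal R$ principal). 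Then for an \emph{arbitrary} pair of loops $l^{\pm}$ at $v$ one forms $w_{\pm}=\sum_i w_i^{\pm}$ and proves $\omega(q{w_+}^*w_-q{w_-}^*w_+q)>0$ by an explicit $\epsilon$-estimate using $w_+p=w_+$, the KMS condition, and the scaling $m(Z(\nu_il^{\pm}))=e^{-\beta F(l^{\pm})}m(Z(\nu_i))$. This yields the exact value $\beta(F(l^+)-F(l^-))\in\Sp(qMq)$, and taking closures finishes the proof. You have correctly located where conservativity must enter, but the decisive positivity statement is asserted rather than proved, and the route you propose for it does not go through.
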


\begin{proof} Note, first of all, that under the present assumptions the
  real-valued homomorphism $c_F$ on $\mathcal G$ defined from $F$ satisfies
  the two conditions on $c_0$ in Theorem 2.2 of \cite{Th3}. This implies that all KMS weights of $\alpha^F$ are
  gauge invariant. Furthermore, since the measure associated with
  $\psi$ is supported and conservative on $P(G)$, it follows from Theorem 4.11, Theorem 4.14 and Proposition 4.16 in
  \cite{Th5} that up to multiplication by scalars $\psi$ is the only
  $\beta$-KMS weight for $\alpha^F$. In particular, $\psi$ is extremal and $ M =
  \pi_{\psi}(C^*(G))''$ is a factor.

By Lemma \ref{nov21} and the discussion preceding it, it suffices here
to consider a non-zero central projection $q
\in 1_vN1_v$ for some vertex $v\in V$, and show that $R_{G,F} 
\subseteq \Sp(qMq)$. To this end consider two finite paths $l^{\pm}$
in $G$ such
that $s(l^{\pm}) =  r(l^{\pm}) = v$. It suffices to show that
$\beta (F(l^+)-F(l^-)) \in \Sp (qMq)$. Let $\omega$ be the state on $1_vM1_v$ given by $\omega(a) =
\psi(1_v)^{-1}\tilde{\psi}(a)$. Then $\omega$ is a faithful normal state which
is a trace on $1_vN1_v$
and we consider the corresponding $2$-norm 
$$
\|a\|_v = \sqrt{\omega(a^*a)} .
$$
By Kaplansky's density theorem there is an element $f \in 1_vC_c(\mathcal G)1_v$
  such that $0 \leq f \leq 1$ and
  $\left\|q- f\right\|_v$ is as small as we want. Since $f$ can be
  approximated in norm by a linear combination of
  characteristic functions of sets of the form (\ref{nov32a}) we may as
  well assume that $f$ is such a linear combination. Then the limit
$$
g = \lim_{R \to \infty}  \frac{1}{R} \int_0^R \alpha^F_{t}(f) \ dt \ 
$$
exists in norm, and $g \in 1_v C_c(\mathcal G)1_v$  is supported on the open
sub groupoid $\mathcal R$ of $\mathcal G$ consisting of the elements
$(\mu x, |\mu| -|\mu'|,\mu' x)$ with $F(\mu) = F(\mu')$. Note that
$$
\left\| q -   \frac{1}{R} \int_0^R \alpha^F_{t}(f) \ dt 
\right\|_v  = \left\|\frac{1}{R} \int_0^R \tilde{\alpha}^F_{t}(q -f) \ dt 
\right\|_v \leq \left\| q-f\right\|_v
$$
by Kadisons Schwarz inequality. We may therefore assume that $f \in 1_vC_c(\mathcal R)1_v$. The groupoid $\mathcal R$ has trivial
isotropy groups since we
assume that (\ref{nov30}) holds, and $\mathcal R$ is therefore an \'etale equivalence relation. It follows therefore from (the
  proof of) Lemma 2.24 in \cite{Th1} that there is a sequence 
$$
\{d^n_j : \ j = 1,2,\cdots, N_n\}, \ n = 1,2, \cdots
$$
of non-negative elements in $C(Z(v))$ such that
$\sum_{j=1}^{N_n} {d^n_j}^2 = 1_v$ and 
$$
P(f) = \lim_{n \to \infty} \sum_{j=1}^{N_n} d^n_j fd^n_j ,
$$ 
where $P$ is the conditional expectation $P : 1_vC^*_r(\mathcal R)1_v
\to C(Z(v))$ and the limit is norm-convergent. Note that 
$$
q =  \sum_{j=1}^{N_n} d^n_j qd^n_j
$$
for all $n$ because $q$ is central in
$1_vN1_v$, and that
$$
\left\|q-P(f)\right\|_v = \lim_{n \to \infty} \left\| \sum_{j=1}^{N_n}
  d^n_j(q-f)d^n_j\right\|_v
\leq \left\|q-f\right\|_v .
$$ 
Using $P(f)$ in the place of $f$ we may
therefore assume that $f \in C(Z(v))$ and that there
are finite paths $\nu_i \in P_f(G)$, finite collections of paths $F_i
\subseteq P_f(G)$ and numbers $0 < t_i \leq 1, \ i = 1,2,\cdots,N$,
such that $Z_{F_i}(\nu_i) \cap Z_{F_j}(\nu_j) = \emptyset$ when $i
\neq j$ and
$$
f =  \sum_{i=1}^N t_i 1_{Z_{F_i}(\nu_i)} .
$$
Fix $i$ for a moment. There is a finite or countably infinite
set $H'_i \subseteq P_f(G)$ such that
$$
Z_{F_i}(\nu_i) = \sqcup_{\nu \in H'_i} Z(\nu) .
$$ 
It follows from (\ref{measure}) that there is a Borel probability measure $m$
on $Z(v)$ such that 
$$
\omega(g) = \int_{Z(v)} P(g) \ dm
$$
when $g \in 1_vC^*(G)1_v$. By Theorem 4.11 in \cite{Th5} $m$ is
supported on
$$
\left\{(x_i)_{i=1}^{\infty} \in P(G): \ s(x_1) = s(x_i) = v \ \text{for
    infinitely many} \ i \right\}
$$
and we can therefore choose a finite set $H_i \subseteq P_f(G)$ such
that $r(\nu) = s(\nu) = v$ for all $\nu \in H_i$,
$$
 \sqcup_{\nu \in H_i} Z(\nu) \subseteq Z_{F_i}(\nu_i)
$$
and
$$
\left\| 1_{Z_{F_i}(\nu_i)} - \sum_{\nu \in H_i} 1_{Z(\nu)}\right\|_v
= \sqrt{m\left( Z_{F_i}(\nu_i) \backslash \sqcup_{\nu \in H_i} Z(\nu) \right)}
$$
is as small as we want. By exchanging $\sum_{\nu \in H_i} 1_{Z(\nu)}$
for $ 1_{Z_{F_i}}(\nu_i)$ for each $i$, we can therefore assume that there
are finite paths $\nu_i \in P_f(G)$ and numbers $0 \leq t_i \leq 1, \
i = 1,2,\cdots,M'$, such that $s(\nu_i) = r(\nu_i) = v$ for all $i$,
$$
f =  \sum_{i=1}^{M'} t_i 1_{Z(\nu_i)} ,
$$
and $Z(\nu_i) \cap Z(\nu_j) = \emptyset$ when $i \neq j$.
Finally, since $q$ is a projection a standard argument, as in the
proof of Lemma 12.2.3 in \cite{KR}, allows us select a subset of the
$\nu_i$'s to arrange, after a renumbering, that 
\begin{equation}\label{nov17}
p = \sum_{i=1}^M 1_{Z(\nu_i)}
\end{equation}
is a projection in $C(Z(v))$ such that
\begin{equation}\label{nov8}
\left\|q - p\right\|_v \leq \epsilon ,
\end{equation}
where $\epsilon > 0$ is as small as we need. We
choose $\epsilon > 0$ so small that
$$
e^{-F(l^+)\beta}
\epsilon  +e^{-(F(l^+)-F(l^-))\beta} \epsilon + 2\epsilon  \ <  \ e^{-F(l^+)\beta}
\omega(q).
$$
For each $\nu_i$ from (\ref{nov17}) we let $w^{\pm}_i \in 1_vC_c(\mathcal G)1_v$ be the
characteristic function of the compact and open set
$$
\left\{ (\nu_ix, - |l^{\pm}|, \nu_il^{\pm}x) : \ x \in \Omega_G
\right\}
$$
in $\mathcal G$. Each $w^{\pm}_i$ is a partial isometry such that
\begin{enumerate}
\item[a)] $w^{\pm}_i{w^{\pm}_i}^* = 1_{Z(\nu_i)}$ ,
\item[b)] ${w^{\pm}_i}^* w^{\pm}_i = 1_{Z(\nu_il^{\pm})} \leq
  1_{Z(\nu_i)}$, and
\item[c)] $\alpha^F_t\left(w^{\pm}_i\right) = e^{-i F(l^{\pm})t}w^{\pm}_i$
  for all $t \in \mathbb R$.
\end{enumerate} 
It follows from Lemma 2.9 in
\cite{Th5} that
\begin{enumerate}
\item[d)] $m(Z(\nu_il^{\pm})) = e^{-\beta F(l^{\pm})}m(Z(\nu_i))$ for all $i$.
\end{enumerate}
Set $w_{\pm} = \sum_{i=1}^M w^{\pm}_i$ and note that $w_{\pm}$ are
both partial isometries. It follows from b) that $w_+p = w_+$ and therefore from
(\ref{nov8}) that
\begin{equation*}
\begin{split}
& \omega (q{w_+}^*w_-q{w_-}^*w_+q) \geq
\omega({w_+}^*w_-q{w_-}^*w_+) - 2\epsilon .
\end{split}
\end{equation*}
Since $\psi$ is a $\beta$-KMS weight, it follows from c) that
$$
\omega({w_+}^*w_-q{w_-}^*w_+) = e^{-(F(l^+)- F(l^-))\beta}
\omega(q{w_-}^*w_+{w_+}^*w_-), 
$$
which thanks to a) is the same as
$$
 e^{-(F(l^+)-F(l^-))\beta}
\omega(q{w_-}^*w_-) .
$$
Using (\ref{nov8}) again we find that
$$
\omega(q{w_-}^*w_-) \geq \omega(p{w_-}^*w_-) - \epsilon .
$$
It follows from b) that $ \omega(p{w_-}^*w_-) =
\omega({w_-}^*w_-)$ while b), d) and a) imply that
$$
\omega({w_-}^*w_-) = m\left(\cup_i Z(\nu_i l^-)\right) = e^{-F(l^-)\beta} \omega(p). 
$$
Since $\omega(p) \geq \omega(q) - \epsilon$ by (\ref{nov8}) we can
put everything together and conclude that
$$
\omega(q{w_+}^*w_-q{w_-}^*w_+q) \geq  e^{-F(l^+)\beta}
\omega(q) - e^{-F(l^+)\beta}
\epsilon  -e^{-(F(l^+)-F(l^-)) \beta} \epsilon -2\epsilon .
$$
It follows therefore from the choice of $\epsilon$ that
$\omega(q{w_+}^*w_-q{w_-}^*w_+q) > 0$. By c), 
$$
\tilde{\alpha}^F_t(q{w_+}^*w_-q) = e^{i(F(l^+) - F(l^-))t} q{w_+}^*w_-q
$$
and hence $\theta_{t} (q{w_+}^*w_-q) = e^{-it\beta( F(l^+) -F(l^-))}
q{w_+}^*w_-q$ for all $t \in \mathbb R$. Since $q{w_+}^*w_-q \neq 0$
it follows now from Lemme 2.3.6 in \cite{C} that $\beta( F(l^+) -F(l^-)) \in \Sp(qMq)$, as required.
  
\end{proof}

 A part of the assumption in Theorem
  \ref{nov9} is that $\beta \neq 0$, and both the proof and the
  conclusion fails in general if $\beta =  0$. However, a $0$-KMS weight is a
  densely defined trace, and it can not exist when $C^*(G)$ is simple
  and purely infinite, which it is when $G$ is strongly connected
  unless $G$ only consists of
  a single finite loop. This is therefore the only case ruled out by
  the assumption $\beta \neq 0$.

By using Theorem \ref{nov9} the factor type of the KMS weight $\psi$
can be determined from the rule summarised by
Pedersen in 8.15.11 of \cite{Pe}. When $G$ is finite Theorem \ref{nov9} recovers the results of Okayasu
from \cite{O}. As pointed in \cite{Th5} there can
be KMS states on $C^*(G)$ when $G$ is finite, also for actions that are not
covered by Okayasu's work. In such cases the factor type can be
determined from
Theorem \ref{nov9} because the condition (\ref{nov30}) is satisfied,
cf. Proposition 4.15 in \cite{Th5}.

To
formulate the result for the gauge action, let the \emph{global
  period} $d_G$ of $G$ be the greatest common divisor of the set
$$
\left\{ |\nu| \geq 1 : \ \nu \in P_f(G), \ s(\nu) = r(\nu) = v
\right\}
$$
for some vertex $v$; the number is independent of which vertex we
choose. Since $G$ is strongly connected the global period is the same
as the $d_G$ introduced for arbitrary graphs in Section 4.2 of \cite{Th3}.

\begin{cor}\label{nov14} 
 Assume that $G$ is strongly connected, recurrent and with finite Gurevich
  entropy $h(G) > 0$. Let $\psi$ be the essentially unique $h(G)$-KMS weight for the gauge
  action  on $C^*(G)$. The
  factor type of $\psi$ is $III_{\lambda}$ where $\lambda = e^{-d_Gh(G)}$.  
\end{cor}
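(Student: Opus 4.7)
The plan is to derive the corollary directly from Theorem~\ref{nov9} applied to the constant function $F \equiv 1$ at inverse temperature $\beta = h(G)$, with only a routine computation of $R_{G,F}$ and an appeal to Connes's classification.

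First I would verify the hypotheses of Theorem~\ref{nov9}. For $F \equiv 1$ one has $F(\nu) = |\nu|$, so the condition (\ref{nov30}) reads ``$|\nu| \neq 0$ whenever $\nu$ is a loop of positive length,'' which is automatic. Since $G$ is strongly connected, recurrent and has finite positive Gurevich entropy, the essentially unique $h(G)$-KMS weight $\psi$ is harmonic and, by Proposition~4.16 in \cite{Th5}, the measure it determines on $P(G)$ is conservative. Because $\beta = h(G) > 0$, we have $\beta \neq 0$. Theorem~\ref{nov9} therefore applies and yields that $M := \pi_\psi(C^*(G))''$ is a hyperfinite factor with
\[
\Gamma(M) \;=\; R_{G,F}.
\]

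Next I would compute $R_{G,F}$ explicitly. Fix a vertex $v$ and let
\[
L \;=\; \bigl\{\,|\nu|:\ \nu \in P_f(G),\ s(\nu) = r(\nu) = v,\ |\nu| \geq 1\,\bigr\}
\]
be the set of lengths of loops at $v$. Since $G$ is strongly connected, $L$ is non-empty, and it is closed under addition because concatenation of loops at $v$ is again a loop at $v$. By the very definition of the global period $\gcd L = d_G$, and a standard elementary argument then gives $L - L = d_G\,\mathbb{Z}$. Because $F(v) = 0$ and $F(\mu) = |\mu|$ for any finite path $\mu$, the subgroup entering the definition of $R_{G,F}$ coincides with $h(G)\,(L - L) = h(G)\,d_G\,\mathbb{Z}$, which is already discrete and hence closed in $\mathbb{R}$. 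Thus
\[
\Gamma(M) \;=\; h(G)\,d_G\,\mathbb{Z}.
\]

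Finally I would invoke Connes's classification of the $\Gamma$-invariant, summarised in 8.15.11 of \cite{Pe}: for a factor $M$, the equality $\Gamma(M) = c\,\mathbb{Z}$ with $c > 0$ is equivalent to $M$ being of type $III_\lambda$ with $\lambda = e^{-c}$. Taking $c = d_G\,h(G)$ yields $\lambda = e^{-d_G h(G)}$, which is the claim. The substantive work is entirely contained in Theorem~\ref{nov9}; the only non-trivial step beyond bookkeeping is identifying $L - L$ with $d_G\mathbb{Z}$, and this is routine since $L$ is an additive subsemigroup of $\mathbb{N}$ with greatest common divisor $d_G$.
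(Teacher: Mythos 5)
Your proposal is correct and follows exactly the route the paper takes: verify that $F\equiv 1$ satisfies (\ref{nov30}) and that the measure is conservative, apply Theorem~\ref{nov9} to get $\Gamma(\pi_\psi(C^*(G))'') = R_{G,F} = d_G h(G)\,\mathbb{Z}$, and read off the type from 8.15.11 of \cite{Pe}. The only content beyond the theorem is the identification $R_{G,F} = h(G)\,d_G\,\mathbb{Z}$, which you justify correctly (the set of differences of loop lengths at a fixed vertex is the group generated by the additive semigroup of loop lengths, hence $d_G\mathbb{Z}$), matching the paper's one-line computation.
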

\begin{proof} $\psi$ exists under the given assumptions and the
  corresponding measure is conservative by \cite{Th5}. Hence Theorem
  \ref{nov9} applies and the result follows because
$$
R_{G,F} = d_G h(G) \mathbb Z 
$$
since $F =1$.
\end{proof}

The condition $h(G) > 0$ rules out only the case where $G$ is a single
finite loop.

\section{The KMS weights from exits}\label{exits}

We retain in this section the assumption that $G$ is strongly
connected. As in \cite{Th5} an \emph{exit path} in $G$ is a sequence
$t=(t_i)_{i=1}^{\infty}$ of vertexes such that there is an edge $e_i$ with
$s(e_i) = t_i$ and $r(e_i) = t_{i+1}$ for all $i$, and such that $\lim_{i
  \to  \infty} t_i = \infty$ in the sense that $t_i$ eventually leave
any finite subset of vertexes. For a given exit path
$t$ and a real number $\beta$ we set
$$
t^{\beta}(i) = A(\beta)_{t_1t_2} A(\beta)_{t_2t_3}\cdots A(\beta)_{t_{i-1}t_i}
$$
where $A(\beta)$ is the matrix (\ref{matrix}). 
Then $t$ is \emph{$\beta$-summable} when the limit
$$
\lim_{i \to \infty} t^{\beta}(i)^{-1} \sum_{n=0}^{\infty} A(\beta)^n_{vt_i} 
$$
is finite for one, and hence for all vertexes $v \in V$. As shown in
\cite{Th5} a $\beta$-summable exit gives rise to a harmonic
$\beta$-KMS measure $m_t$ determined by the condition that
$$
m_t(Z(v)) = \lim_{i \to \infty} t^{\beta}(i)^{-1} \sum_{n=0}^{\infty}
A(\beta)^n_{vt_i} 
$$
for every vertex $v$. It follows from Corollary 5.4 in \cite{Th5} that
the corresponding KMS weight $\psi_t$ on $C^*(G)$ is extremal among the
gauge invariant KMS weights. But in fact, $\psi_t$ is extremal among
all KMS weights when $G$ is strongly connected. This follows from a
combination of Proposition 2.6 in \cite{Th5} with Theorem 1.3 in \cite{N}
by observing that the isotropy group in $\mathcal G$ of an element in
$P(G)$ which is not pre-periodic is trivial. We can therefore talk about
the factor type of $\psi_t$ without further assumptions on $F$. 

To determine the factor type of $\psi_t$, set 
$$
k_n = \# s^{-1}(t_n) \cap
r^{-1}(t_{n+1})
$$
which is finite, cf. \cite{Th5}. Choose a numbering $e^n_1,e^n_2, \cdots , e^n_{k_n}$ of the edges in
$s^{-1}(t_n) \cap r^{-1}(t_{n+1})$, and let $\omega_n$ be the state on $M_{k_n}(\mathbb C)$
given by
\begin{equation}\label{nov26}
\omega_n (a) = \frac{\Tr \left( e^{-\beta H}
    a\right)}{\Tr\left(e^{-\beta H}\right)},
\end{equation}
where $H = \diag\left(F(e^n_1),F(e^n_2), \cdots,
  F(e^n_{k_n})\right)$. Then $\omega = \otimes_{n=1}^{\infty}
\omega_n$ is a state on the UHF-algebra
\begin{equation}\label{UHF}
A = \otimes_{n=1}^{\infty} M_{k_n}(\mathbb C) ,
\end{equation}
and we set
$$
\mathcal R(t) = \pi_{\omega}(A)'' .
$$ 
Note that $\mathcal R(t)$ is an Araki-Woods factor, cf. \cite{AW}.

\begin{lemma}\label{nov24} Let $G$ be a strongly connected graph and $t$
  a $\beta$-summable exit path in $G$, and let $\psi_t$ be the
  corresponding $\beta$-KMS weight for the generalised gauge action
  $\alpha^F$ on
  $C^*(G)$. There is a projection $p \in  \pi_{\psi_t}(C^*(G))''$ in
  the fixed point algebra of $\tilde{\alpha}^F$ such
  that 
$$
p \pi_{\psi_t}(C^*(G))''p \simeq \mathcal R(t) .
$$ 
\end{lemma}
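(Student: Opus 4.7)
The plan is to construct $p$ as the strong limit of an explicit decreasing sequence of finite-rank projections built from paths along the exit $t$, to identify a copy of the UHF algebra $A = \bigotimes_n M_{k_n}(\mathbb C)$ inside $pMp$ (with $M := \pi_{\psi_t}(C^*(G))''$), to use the KMS condition to show that the normalised restriction of $\tilde{\psi_t}$ to this copy is the product state $\omega$, and then to conclude $pMp \simeq \mathcal R(t)$ by Kaplansky density together with a GNS-uniqueness argument.

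For $j\geq 1$ and $m\geq 0$ I set
\[
p_m^{(j)} = \sum_{i_j,\ldots,i_{j+m-1}} S_{e^j_{i_j}\cdots e^{j+m-1}_{i_{j+m-1}}}\, S_{e^j_{i_j}\cdots e^{j+m-1}_{i_{j+m-1}}}^{*}
\]
with $p_0^{(j)} := P_{t_j}$. Each $p_m^{(j)}$ is a diagonal (hence $\alpha^F$-fixed) projection in $C^*(G)$; from $\sum_i S_{e^j_i} S_{e^j_i}^{*} \leq P_{t_j}$ the sequence $(p_m^{(j)})_m$ is decreasing and one has the telescoping identity $p_m^{(j)} = \sum_{i=1}^{k_j} S_{e^j_i}\, p_{m-1}^{(j+1)}\, S_{e^j_i}^{*}$. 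Passing to strong limits in $M$ produces projections $p^{(j)}$ in the fixed-point algebra of $\tilde{\alpha}^F$ satisfying
\[
p^{(j)} = \sum_{i=1}^{k_j} S_{e^j_i}\, p^{(j+1)}\, S_{e^j_i}^{*};
\]
set $p := p^{(1)}$. Since $\tilde{\psi_t}(p_n^{(1)}) = t^{\beta}(n+1)\, m_t(Z(t_{n+1}))$ and the straight-path estimate $A(\beta)^{i-j}_{t_j t_i} \geq t^{\beta}(i)/t^{\beta}(j)$ forces $t^{\beta}(n+1)\, m_t(Z(t_{n+1})) \geq 1$, normality of $\tilde{\psi_t}$ (with $\tilde{\psi_t}(p_1^{(1)}) < \infty$ by $\beta$-summability) yields $1 \leq \tilde{\psi_t}(p) < \infty$, so $p$ is a non-zero element of $\mathcal N_{\tilde{\psi_t}}$.

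For $\mu,\nu\in E_n := \{e^1_{i_1}\cdots e^n_{i_n}\}$ set $e_{\mu\nu}^{(n)} := S_\mu\, p^{(n+1)}\, S_\nu^{*}$. The identity $S_\nu^{*} S_{\mu'} = \delta_{\mu'\nu}\, P_{t_{n+1}}$ together with $p^{(n+1)}\leq P_{t_{n+1}}$ makes these a system of matrix units for $M_{k_1\cdots k_n}$ whose total sum is $p$; iterating the recursion yields the refinement $e_{\mu\nu}^{(n)} = \sum_{i=1}^{k_{n+1}} e_{\mu e^{n+1}_i,\,\nu e^{n+1}_i}^{(n+1)}$, i.e.\ the standard unital inclusion $a\mapsto a\otimes I_{k_{n+1}}$. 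Hence the $*$-algebra $A \subseteq pMp$ generated by all $e_{\mu\nu}^{(n)}$ is a copy of $\bigotimes_n M_{k_n}(\mathbb C)$ with unit $p$. Because $S_\mu$ is entire analytic for $\tilde{\alpha}^F$ with $\tilde{\alpha}^F_{i\beta}(S_\mu) = e^{-\beta F(\mu)} S_\mu$ and $p^{(n+1)}$ is fixed by $\tilde{\alpha}^F$, the KMS condition gives
\[
\tilde{\psi_t}\bigl(e_{\mu\nu}^{(n)}\bigr) = e^{-\beta F(\mu)}\, \tilde{\psi_t}\bigl(p^{(n+1)} S_\nu^{*} S_\mu\bigr) = \delta_{\mu\nu}\, e^{-\beta F(\mu)}\, \tilde{\psi_t}(p^{(n+1)}),
\]
and summing the diagonal yields $\tilde{\psi_t}(p) = t^{\beta}(n+1)\, \tilde{\psi_t}(p^{(n+1)})$. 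With $\omega' := \tilde{\psi_t}(\cdot)/\tilde{\psi_t}(p)$ on $pMp$, this reads $\omega'(e_{\mu\nu}^{(n)}) = \delta_{\mu\nu}\prod_{r=1}^n e^{-\beta F(e^r_{i_r})}/A(\beta)_{t_r t_{r+1}}$, which is exactly $(\omega_1\otimes\cdots\otimes\omega_n)(e_{\mu\nu}^{(n)})$. So $\omega'|_A = \omega$.

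The remaining step, and the main obstacle, is $\sigma$-weak density of $A$ in $pMp$. A prefix-matching computation using the recursion shows $pS_\mu = S_\mu p^{(|\mu|+1)}$ when $\mu$ is an exit-prefix from $t_1$, and $pS_\mu = 0$ otherwise; consequently every $p\,\pi_{\psi_t}(S_\mu S_\nu^{*})\, p$ lies in the $*$-algebra generated by the $e_{\mu\nu}^{(n)}$, so $p\pi_{\psi_t}(C^*(G))p$ coincides with $A$ as a $C^*$-subalgebra of $pMp$. By Kaplansky's density theorem applied to the $\sigma$-weakly dense inclusion $\pi_{\psi_t}(C^*(G))\subseteq M$, $p\pi_{\psi_t}(C^*(G))p$, hence $A$, is $\sigma$-weakly dense in $pMp$. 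Since $\omega'$ is a faithful normal state on $pMp$ with cyclic vector $\xi := \Lambda_{\psi_t}(p)/\sqrt{\tilde{\psi_t}(p)} \in pH_{\psi_t}$, the density makes $\xi$ cyclic for $A$ as well; then the standard identification of GNS triples for the compatible states $(\omega, A) \subseteq (\omega', pMp)$ transports $\mathcal R(t) = \pi_\omega(A)''$ onto $A'' = pMp$, giving the desired isomorphism.
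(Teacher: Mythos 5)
Your proposal is correct and follows essentially the same route as the paper's proof: the same decreasing sequence of diagonal projections defining $p$, the same system of matrix units $S_\mu p^{(n+1)}S_\nu^*$ generating a copy of the UHF algebra inside $p\pi_{\psi_t}(C^*(G))''p$, the same density argument via compressions of the elements $S_\mu S_\nu^*$, and the same GNS identification at the end. The only substantive difference is that you identify the normalised restriction of $\tilde{\psi_t}$ with the product state $\omega$ by a direct KMS/conformality computation on the matrix units, whereas the paper identifies the restricted dynamics as the product action $\otimes_n \Ad u^n_t$ and appeals to Kishimoto's uniqueness of the $\beta$-KMS state; both arguments are valid.
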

\begin{proof}
Set $M = \pi_{\psi_t}(C^*(G))''$. As in the proof of
  Theorem \ref{nov9} we suppress the representation $\pi_{\psi_t}$ in
  the notation. For each $n \in \mathbb N$ let 
$$
A_n = \left\{ \xi \in
    P_f(G): \ \xi = e_1e_2\cdots e_{n-1}, \ s(e_i) = t_i, \ i =1,2,
    \cdots, n-1, \ r(e_n) = t_n \right\}.
$$ 
When $\xi,\xi' \in A_n$,
  let $1_{\xi,\xi'} \in C_c(\mathcal G)$ be the characteristic
  function of 
$$
\left\{ (\xi x, 0,\xi'x) : \ x \in \Omega_G \right\} .
$$
Set $q_n = \sum_{\xi \in A_n} 1_{\xi, \xi}\in C_c(\Omega_G)$. Then $q_n \geq q_{n+1}$
and we define $p \in M$ to the limit 
$$
p  = \lim_{n \to\infty} q_n 
$$ 
in the strong operator topology. If we let $\tilde{\psi_t}$ denote the
normal weight on $M$ which extends $\psi_t$, we find that 
$$ 
\tilde{\psi_t}\left(p\right) = \lim_{n \to \infty}
\psi_t(q_n) = m_t(\pi^{-1}(t)) = 1,
$$
cf. Lemma 5.3 in \cite{Th5}.
In particular, $ p \neq 0$, and $\tilde{\psi_t}$ is a state on
$pMp$. Since $C_0(\Omega_G) \subseteq C^*(G)$ is in the fixed point
algebra of $\alpha^F$ it follows that $p$ is fixed by $\tilde{\alpha}^F$.
Set 
$$
p_{\xi,\xi'} = p1_{\xi,\xi'}p .
$$
A calculation in the $*$-algebra $C_c(\mathcal R)$ shows that 
$$
q_m1_{\xi_1,\xi_2}q_mq_m1_{\xi_3,\xi_4}q_m = \begin{cases} 0, &  \
  \text{when} \ \xi_2\neq \xi_3 \\ q_m1_{\xi_1,\xi_4}q_m, & \
\text{when} \ \xi_2 = \xi_3 \end{cases},
$$
provided $m > n$. It follows that  $\left\{ p_{\xi, \xi'} : \ \xi, \xi' \in A_n\right\}$ is
the set of matrix units in a unital $C^*$-subalgebra $\mathcal M_n$ of dimension $\#
A_n$ in $pMp$. Observe that
$$  
p_{\xi,\xi'} =
\sum_e p_{\xi e, \xi' e}
$$ 
when we sum over all edges $e \in s^{-1}(t_n) \cap
r^{-1}(t_{n+1})$. It follows that $\mathcal M_n \subseteq \mathcal
M_{n+1}$. Note also that $ pM p$ is the
closure in the strong operator topology of 
$$
 p1_{t_1}C_c(\mathcal G)1_{t_1}p .
$$
By definition of the topology of $\mathcal G$ any $f\in 1_{t_1}C_c(\mathcal
G)1_{t_1}$ can be approximated in norm by finite linear combinations of
functions that are the characteristic function of a set $B$ of the form
(\ref{top}) with $s(\mu) = s(\mu') = t_1$.
Note that
$q_m1_Bq_m = 0$
when $m > \max \{|\mu|,\mu'|\}$ and $|\mu| \neq |\mu'|$. So assume
that $|\mu| = |\mu'| =k$. Then $q_m 1_Bq_m = 0$ for $m > k$ unless
$\mu, \mu' \in A_{k}$. So assume that $\mu, \mu' \in A_k$. When $m$ is
larger than $m_0 = k+\max\{|\nu| : \ \nu \in F \cup F' \}$ we see that
there are subsets $F_1,F_2 \subseteq A_{m_0}$ such that
$$
q_m1_Bq_m = \sum_{\xi \in F_1,\xi' \in F_2} q_m 1_{\xi,\xi'}q_m .
$$
It follows first that $ p1_B p \in
\mathcal M_{m_0}$ and then that $\bigcup_n \mathcal
  M_n$ is dense in 
$p M p$ for the strong operator
topology.

Using the same numbering of the edges in $s^{-1}(t_n) \cap
r^{-1}(t_{n+1})$ as above we set
$$
u^n_t = \diag \left(e^{iF(e^n_1)t}, e^{iF(e^n_2)t}, \cdots ,
  e^{iF(e^n_{k_n})t)}\right) \ \in \ M_{k_n}(\mathbb C) .
$$
The norm closure $\overline{\bigcup_n \mathcal M_n}$ is a copy of
the UHF-algebra $A$ from (\ref{UHF})
and in this picture the restriction of the automorphism group
$\tilde{\alpha}^F$ to $\overline{\bigcup_n \mathcal M_n}$ is the tensor product action
$$
\otimes_{n=1}^{\infty} \Ad u^n_t, \ t \in \mathbb R .
$$
The tensor product state $\omega =
\otimes_{n=1}^{\infty} \omega_n$ is a $\beta$-KMS state for
$\otimes_{n=1}^{\infty} \Ad u^n_t$ on $A$, and the same is the restriction of
$\tilde{\psi_t}$ to $\overline{\bigcup_n \mathcal M_n}$. The two states
must therefore agree under the identification $\overline{\bigcup_n
  \mathcal M_n} = A$, thanks to uniqueness of the $\beta$-KMS state, \cite{K}. It follows that
$$
pMp \simeq \pi_{\omega}(A)'' = \mathcal R(t).
$$
\end{proof}

We will say that the exit path $t = (t_n)_{n=1}^{\infty}$ is \emph{slim}
when $s^{-1}(t_i) \cap r^{-1}(t_{i+1})$ only contains one edge for all $i$ large enough.

\begin{thm}\label{nov25}  Let $G$ be a strongly connected graph and $t$
  an exit path in $G$ which is $\beta$-summable for the gauge action, and let $\psi_t$ be the
  corresponding $\beta$-KMS weight for the gauge action on
  $C^*(G)$. The factor type of $\psi_t$ is $I_{\infty}$
  when $t$ is slim and $II_{\infty}$ factor when it is
  not.
\end{thm}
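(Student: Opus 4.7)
The plan is to invoke Lemma~\ref{nov24}, specialize it to the gauge action in order to identify the Araki--Woods factor $\mathcal R(t)$ concretely, and then transfer the resulting information about the corner $pMp$ to the whole factor $M = \pi_{\psi_t}(C^*(G))''$.

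Specializing Lemma~\ref{nov24} to $F \equiv 1$: the operator $H = \diag(1,\dots,1)$ in (\ref{nov26}) is scalar, so each $\omega_n$ reduces to the normalized trace on $M_{k_n}(\mathbb C)$, and $\omega = \bigotimes_n \omega_n$ is the canonical tracial state on the UHF algebra $A = \bigotimes_n M_{k_n}(\mathbb C)$. Hence $\mathcal R(t) = \pi_\omega(A)''$ is the tracial GNS factor of $A$. If $t$ is slim, then $k_n = 1$ for all sufficiently large $n$, so $A$ is canonically a finite-dimensional matrix algebra $M_N(\mathbb C)$ with $N = \prod_n k_n$, and $\mathcal R(t) = M_N(\mathbb C)$ is of type I. If $t$ is not slim, infinitely many $k_n$ are at least $2$, so $A$ is a genuine infinite UHF-algebra; its tracial state being unique, $\mathcal R(t)$ is the hyperfinite $II_1$ factor.

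Since the type (I, II, III) of a factor is preserved under passing to a non-zero corner, $M$ is of type I in the slim case and of type II in the non-slim case. It remains to verify that $M$ is an infinite factor. The existence of an exit path forces $V$ to be infinite, and a strongly connected graph with infinitely many vertices satisfies Condition~(L) (for any cycle, choose a vertex $w$ not on the cycle; by strong connectivity there is a path from the cycle to $w$, and the first edge of this path that leaves the cycle provides an exit), so by standard graph $C^*$-algebra theory $C^*(G)$ is simple and purely infinite. In particular $\pi_{\psi_t}$ is faithful, so any partial isometry in $C^*(G)$ witnessing proper infiniteness of some $P_v$ maps to a partial isometry in $M$ witnessing proper infiniteness of $\pi_{\psi_t}(P_v) \neq 0$. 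Hence $M$ contains an infinite projection and is itself infinite, giving type $I_\infty$ in the slim case and $II_\infty$ in the non-slim case. The main obstacle is precisely this last step: factor type transfers automatically through corners, but the finite-vs-infinite distinction does not, which is why the pure-infiniteness input from $C^*(G)$ is the essential extra ingredient.
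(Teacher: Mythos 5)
Your proposal is correct and follows essentially the same route as the paper: specialize Lemma~\ref{nov24} to $F\equiv 1$ to see that $\mathcal R(t)$ is a finite type~$I$ factor when $t$ is slim and the hyperfinite $II_1$ factor otherwise, transfer the type (I vs.\ II) from the corner $p\pi_{\psi_t}(C^*(G))''p$ to the whole factor, and then rule out finiteness. The only divergence is in that last step, where the paper gives an explicit elementary construction --- partial isometries $V_n$ supported on $\left\{(\mu_n x,|\mu_n|,x):x\in Z(t_1)\right\}$ for paths $\mu_n$ from $t_n$ back to $t_1$, whose ranges are eventually mutually orthogonal because $t$ is an exit --- whereas you invoke simplicity and pure infiniteness of $C^*(G)$ (via Condition~(L) and faithfulness of $\pi_{\psi_t}$), which is a valid, if heavier, substitute that the paper itself relies on elsewhere.
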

\begin{proof} The gauge action arises by choosing $F$ to be constant
  $1$. It follows that $\mathcal R(t)$ is a type I factor when $t$ is
  slim and the hyper finite $II_1$ factor otherwise. It follows
  therefore from Lemma \ref{nov24} that $
  \pi_{\psi_t}(C^*(G))''$ is type $I$ when $t$ is slim and type $II$
  otherwise. To complete the proof we need to show that $M$ is not finite. To this
end choose for each $n \geq 2$ a path $\mu_n$ in $G$ such that
$s(\mu_n) = t_n$ and $r(\mu_n) = t_1$. The characteristic function
$V_n$ of
the set
$$
\left\{ (\mu_n x, |\mu_n|, x) : \ x \in Z(t_1) \right\}
$$
is an element of $C_c(\mathcal G)$ such that $V_n^*V_n = 1_{t_1}$ and
$V_nV_n^* \leq 1_{t_n}$. In particular, the $V_n$'s are non-zero
partial isometries in $M$ and since $t$ is an exit, an infinite
sub collection of them will have orthogonal ranges. It follows that $M$
is not finite.
\end{proof}

The dichotomy in Corollary \ref{nov25} can also be formulated in terms
of the exit measure $m_t$. Indeed, $t$ is slim if and only if $m_t$ is
an atomic measure on $P(G)$. The constructions in \cite{Th5} show that
both possibilities occur in abundance.

It follows from Lemme 2.3.3 in \cite{C} that in the setting of Lemma
\ref{nov24} we have the identity
$$
\Gamma( \pi_{\psi_t}(C^*(G))'') = \Gamma(\mathcal R(t)) .
$$
Therefore Lemma \ref{nov24} also 
applies to determine
the factor type of the extremal $\beta$-KMS weights coming from an exit
for more general actions. The following gives an example of this.

\begin{example}\label{dec5b} 

Let $G$ be the following graph.
\begin{equation}\label{G5}
\begin{xymatrix}{
t_1 \ar@/^/[r]\ar@/_/[r] & t_2 \ar@/^/[r]\ar@/_/[r] \ar[d] & t_3 \ar[d]
\ar@/^/[r]\ar@/_/[r] & t_4 \ar[d]\ar@/^/[r]\ar@/_/[r] & t_5 \ar[d]
\ar@/^/[r]\ar@/_/[r] & \hdots \\
d_1 \ar[u]  &  d_2  \ar[l] & d_3  \ar[l] & d_4 
\ar[l] & d_5  \ar[l] & \hdots \ar[l] 
}
 \end{xymatrix}
\end{equation}
For each $n\in
  \mathbb N$ there are two edges $e^n_1$ and $e^n_2$ such that
  $s(e^n_i) = t_n, r(e^n_i) = t_{n+1}$, $i =1,2$. For each $n$ we set $F(e^n_i) =
  a_i$, where $a_1,a_2$ are positive real numbers with $a_1 > a_2$, and we set $F(e) =
  0$ for all other edges $e$. Let $\beta \in \mathbb R$ and consider the
  matrix $A(\beta)$ from (\ref{matrix}). Recall, \cite{Th5}, that a harmonic vector for $A(\beta)$ is a vector
$\psi = (\psi_v)_{v \in V}$ such that $\psi_v > 0$ and
$$
\psi_v = \sum_{w \in V} A(\beta)_{vw} \psi_w 
$$
for all $v \in V$.
Set 
$$
x_{\beta} = e^{-\beta a_1} + e^{-\beta a_2} .
$$
Consider a vector $\psi = (\psi_v)_{v \in V}$ with $\psi_{t_1} = 1$.
Then $\psi$ is a harmonic vector for $A(\beta)$ if and only if $\psi_v
> 0$ for all $v\in V$, and
\begin{enumerate}
\item[a)] $\psi_{d_i} = 1$ for all $i$,
\item[b)] $\psi_{t_2}= x_{\beta}^{-1}$, and
\item[c)] $\psi_{t_{k+1}} = x_{\beta}^{-1}(\psi_{t_k} -1)$, $k \geq 2$. 
\end{enumerate}
It follows that a harmonic vector exists if and only if $\beta \geq
\beta_0$, where $\beta_0$ is the real number with the property that
$$
x_{\beta_0} = e^{-\beta_0 a_1} + e^{-\beta_0 a_2} = \frac{1}{2} .
$$
By using the Criterion 2 in Corollary 4 on page 372 in \cite{V} we see
that $A(\beta)$ is $1$-transient for $\beta > \beta_0$ and
$1$-recurrent for $\beta = \beta_0$. 
It follows then from Theorem 5.6 in \cite{Th5} that there is a unique
$\beta$-KMS weight $\psi_{\beta}$ for $\alpha^F$, up to multiplication
by scalars, for
all $\beta \geq \beta_0$; for $\beta > \beta_0$ it is given by the exit
measure coming from the unique exit in $G$ and for $\beta= \beta_0$ it is
given by a conservative measure on $P(G)$. By Lemma \ref{nov24} the
factor type of $\psi_{\beta}$ is $III_{\lambda}$ where $\lambda =
e^{-\beta(a_1  -a_2)}$ when $\beta >
\beta_0$. $\pi_{\psi_{\beta}}(C^*_r(G))''$ is a Powers factor in this
case. Concerning
the factor type of $\psi_{\beta_0}$ we note that the condition on $F$
in Theorem \ref{nov9} is satisfied and conclude that 
$$
\Gamma( \pi_{\psi_{\beta_0}}(C^*(G))'') = \overline{ \beta_0
  a_1\mathbb Z + \beta_0 a_2\mathbb Z} .
$$
Thus $\psi_{\beta_0}$ is of type $III_1$ if $\frac{a_1}{a_2}$ is
irrational. In other cases its type can be determined from 8.15.11 of \cite{Pe}.


\end{example}

\section{Ground states}\label{ground}

In this section the graph $G$ can be an arbitrary countable oriented
graph and $F : E \to \mathbb R$ an arbitrary function. We consider the generalized gauge action $\alpha^F$ restricted to a
corner $1_vC^*(G)1_v$, where $v$ is a fixed but arbitrary vertex in the
graph. Recall that a state $\omega$ on $1_vC^*(G)1_v$ is a \emph{ground state}
for $\alpha^F$ restricted to $1_vC^*(G)1_v$ when
$$
-i \omega(a^* \delta(a)) \geq 0
$$
for all elements $a$ in the domain of $\delta$, the infinitesimal generator of (the
restriction of)
$\alpha^F$, cf. \cite{BR}. To describe the ground states observe first that the fix
point algebra of $\alpha^F$ is the $C^*$-algebra of the open sub-groupoid
$$
\mathcal F = \left\{ (\mu x, |\mu| - |\mu'|, \mu'x)  : \ x \in
  \Omega_G, \ F(\mu) = F(\mu')  \right\} 
$$  
of $\mathcal G$.
The conditional expectation $Q :C^*(G) \to C^*_r(\mathcal F)$ extending the
restriction map $C_c(\mathcal G) \to C_c(\mathcal F)$ can be
described as a limit:
\begin{equation}\label{intX}
Q(a) = \lim_{R \to \infty} \frac{1}{R} \int_{0}^{R} \alpha^F_t(a) \ dt ,
\end{equation}
cf. the proof of Theorem 2.2 in \cite{Th3}.

When $x \in \Omega_G, \ z \in P_f(G)$, write $z \subseteq x$ when $x|_{[1,|z|]} = z$ or $|z| =0$ and $z = s(x)$. An element $x \in \Omega_G$ is an \emph{$F$-geodesic} when the following holds:
$$
z,z' \in P_f(G), \ z \subseteq x, \ s(z') = s(z), \ r(z') = r(z) \ \Rightarrow \ F\left(z'\right) \geq  F\left(z\right) .
$$ 
We denote the set of $F$-geodesics in $\Omega_G$ by
$\text{Geo}(F,G)$. We leave the proof of the following observation to the reader.

\begin{lemma}\label{nov11}  $\text{Geo}(F,G) \cap Z(v)$ is closed in
  $Z(v)$, and $\mathcal F$-invariant in the sense that
$$
(x,k,y) \in \mathcal F, x \in \text{Geo}(F,G)\cap Z(v), \ y \in Z(v) \
\Rightarrow \ y \in \text{Geo}(F,G) .
$$
\end{lemma}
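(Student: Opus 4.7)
For closedness, the plan is to observe that failure to be an $F$-geodesic is a condition that depends on only finitely many edges. Specifically, if $x \in Z(v)$ is not a geodesic, there is some prefix $z \subseteq x$ and a competing path $z'$ with $s(z')=s(z)$, $r(z')=r(z)$ and $F(z')<F(z)$. Any $y$ in the basic open neighbourhood $Z(z) \cap Z(v)$ shares the same prefix $z$, so the same $z'$ witnesses that $y$ is not a geodesic either. Hence the complement of $\text{Geo}(F,G) \cap Z(v)$ in $Z(v)$ is open.

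For $\mathcal F$-invariance, suppose $(x,k,y)\in\mathcal F$ with $x\in\text{Geo}(F,G)\cap Z(v)$ and $y\in Z(v)$. By definition of $\mathcal F$, I would write $x=\mu w$ and $y=\mu'w$ for some $\mu,\mu'\in P_f(G)$ with $r(\mu)=r(\mu')=s(w)$, $k=|\mu|-|\mu'|$, and the crucial identity $F(\mu)=F(\mu')$. Since $s(y)=v$, also $s(\mu')=v=s(\mu)$. The plan is then to take an arbitrary prefix $z\subseteq y$ and a competing path $z'$ with $s(z')=s(z)=v$, $r(z')=r(z)$, and show $F(z')\geq F(z)$ by transferring the problem to $x$.

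I would split on whether $|z|\leq|\mu'|$ or $|z|>|\mu'|$. In the easy case $|z|>|\mu'|$, one has $z=\mu'\eta$ for $\eta$ a prefix of $w$, so $\mu\eta$ is a prefix of $x$ with $r(\mu\eta)=r(z)$; the geodesic property of $x$ applied to $\mu\eta$ against $z'$ gives $F(z')\geq F(\mu)+F(\eta)=F(\mu')+F(\eta)=F(z)$. The harder case is $|z|\leq|\mu'|$, where $z$ is a prefix of $\mu'$ but not of $x$. Here I would write $\mu'=z\tau$ and compare, along the prefix $\mu\subseteq x$, the path $\mu$ with the alternative $z'\tau$: both go from $v$ to $r(\mu)=r(\mu')$, so the geodesic property of $x$ yields $F(z'\tau)\geq F(\mu)=F(\mu')=F(z)+F(\tau)$, hence $F(z')\geq F(z)$ after cancelling $F(\tau)$.

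The main obstacle is precisely this second case: the prefix $z$ lives in $\mu'$ and has no direct counterpart in $x$. The resolution is the use of $F(\mu)=F(\mu')$ together with the trick of extending both $z$ and $z'$ by the same tail $\tau$ — this converts a comparison ``inside $y$'' into a comparison at the endpoint $r(\mu)$ of a prefix of $x$, where the hypothesis $x\in\text{Geo}(F,G)$ can be invoked directly.
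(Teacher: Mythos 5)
Your argument is correct and is essentially the intended one: the paper leaves this proof to the reader, and your verification of invariance — splitting on whether the prefix $z\subseteq y$ extends past $\mu'$, and in the case $|z|\le|\mu'|$ padding both $z$ and $z'$ with the common tail $\tau$ so that $F(\mu)=F(\mu')$ transfers the comparison to the prefix $\mu\subseteq x$ — is exactly the standard argument, with the edge cases ($|z|=0$, $|\tau|=0$, $|\mu|=0$) all passing through. The closedness argument, using that failure of the geodesic condition is witnessed by a finite prefix and hence persists on the basic open set $Z(z)\cap Z(v)$, is likewise fine.
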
 



Set $\text{Geo}_v(F,G) = \text{Geo}(F,G) \cap Z(v)$. It follows from Lemma \ref{nov11} that the reduction $\mathcal
F|_{\text{Geo}_v(F,G)}$ of $\mathcal F$ to $\text{Geo}_v(F,G)$ is an \'etale groupoid in
itself and that there is a surjective $*$-homomorphism
$$
R : 1_vC^*_r(\mathcal F)1_v \to C^*_r\left(\mathcal
F|_{\text{Geo}_v(F,G)}\right)
$$
extending the map $1_v C_c(\mathcal F)1_v \to C_c(\mathcal
F|_{\text{Geo}_v(F,G)})$ given by restriction.

\begin{lemma}\label{nov12X} $-iR\circ Q(a^*\delta(a)) \geq 0$ for all
  $a$ in the domain of $\delta$.
\end{lemma}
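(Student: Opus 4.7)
The plan is to verify the inequality first on a dense core consisting of $\alpha^F$-analytic elements with finite spectral decomposition, and then extend it by continuity. Concretely, let $\mathcal A_0 \subseteq 1_v C^*(G) 1_v$ be the linear span of the characteristic functions $1_U$ of compact open bisections $U \subseteq 1_v \mathcal G 1_v$ of the form \eqref{top} with $s(\mu) = s(\mu') = v$. Each such $1_U$ is an $\alpha^F$-eigenvector with eigenvalue $F(\mu) - F(\mu')$, so $\mathcal A_0$ is an $\alpha^F$-invariant, dense $*$-subalgebra of $1_v C^*(G) 1_v$ consisting of analytic elements; by standard semigroup theory it is then a core for the restriction of $\delta$ to $1_v C^*(G) 1_v$.

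For $a \in \mathcal A_0$ I would group the spanning bisections by $c_F$-value to write $a = \sum_\lambda a_\lambda$ as a finite sum with $\alpha^F_t(a_\lambda) = e^{it\lambda} a_\lambda$. Then $\delta(a_\lambda) = i\lambda a_\lambda$, and since $a_\mu^* a_\lambda$ lies in the spectral subspace of weight $\lambda - \mu$ while $Q$ annihilates every non-zero spectral subspace, the cross terms in $a^*\delta(a) = i\sum_{\mu,\lambda} \lambda\, a_\mu^* a_\lambda$ drop out under $Q$, leaving
\begin{equation*}
-iR\circ Q(a^*\delta(a)) = \sum_\lambda \lambda\, R(a_\lambda^* a_\lambda).
\end{equation*}

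The crux is to show $R(a_\lambda^* a_\lambda) = 0$ whenever $\lambda < 0$. Since $R$ acts on $C_c(\mathcal F)$ as restriction to $\mathcal F|_{\text{Geo}_v(F,G)}$, it suffices to check pointwise vanishing there, and the groupoid convolution formula reduces this to showing $a_\lambda(\eta) = 0$ for every $\eta \in \mathcal G$ with $s(\eta) \in \text{Geo}_v(F,G)$. An element $\eta$ in the support of $a_\lambda$ admits a representation $\eta = (\nu w, |\nu|-|\nu'|, \nu' w)$ with $s(\nu) = s(\nu') = v$, $r(\nu) = r(\nu')$, and $F(\nu) - F(\nu') = \lambda$; if also $\nu' w = s(\eta) \in \text{Geo}_v(F,G)$, then $\nu' \subseteq \nu' w$, so the $F$-geodesic condition applied to the competing path $\nu$ forces $F(\nu) \geq F(\nu')$, i.e., $\lambda \geq 0$, contradicting $\lambda < 0$.

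Hence $-iR\circ Q(a^*\delta(a)) = \sum_{\lambda \geq 0} \lambda\, R(a_\lambda^* a_\lambda) \geq 0$ on $\mathcal A_0$, and the inequality extends to an arbitrary $a$ in the domain of $\delta|_{1_v C^*(G) 1_v}$ by approximating with $a_n \in \mathcal A_0$ in the graph norm and using that $R\circ Q$ is contractive with image in a $C^*$-algebra whose positive cone is norm-closed. I expect the main delicate point to be the geodesic-vanishing step, and in particular the degenerate case $|\nu'| = 0$, which must be handled using the conventions $F(v) = 0$ and $v = s(\nu' w) \subseteq \nu' w$ so that the competitor argument still applies with $\nu$ a loop at $v$.
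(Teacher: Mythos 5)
Your proof is correct and takes essentially the same route as the paper's: decompose the element into finitely many $\alpha^F$-spectral components supported on basic bisections, observe that $Q$ kills the cross terms so that $-iQ(a^*\delta(a))=\sum_\lambda \lambda\,a_\lambda^*a_\lambda$, and then use the $F$-geodesic condition at the source of the groupoid elements to conclude that $R(a_\lambda^*a_\lambda)=0$ for $\lambda<0$. The only cosmetic difference is that you work on the span of indicator functions and extend by a graph-norm/positivity argument, whereas the paper argues directly on the core $1_vC_c(\mathcal G)1_v$.
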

\begin{proof} Since $1_vC_c(\mathcal G)1_v$ is a core
for $\delta$, cf. Corollary 3.1.7 in \cite{BR}, it suffices to show that
\begin{equation*}\label{nov13}
-i R \circ Q(f^*\delta(f)) \ \geq \ 0
\end{equation*}
when $f \in 1_vC_c(\mathcal G)1_v$. There is a finite collection of
paths $\mu_i,\nu_i \in P_f(G), i = 1,2, \cdots, m$, such that
$s(\mu_i) = s(\nu_i) = v$ for all $i$ and $f = \sum_{k
  =1}^m  f_k$, where
$$
\supp f_k \subseteq \left\{ (\mu_k x, |\mu_k| -|\nu_k|, \nu_k x) : \ x
  \in \Omega_G \right\} .
$$ 
Set
$$
A = \left\{ F(\mu_k) - F(\nu_k) : \ k = 1,2,\cdots, m\right\} .
$$
Then $f = \sum_{a \in A} h_a$, where $h_a \in 1_vC_c(\mathcal G)1_v$
has support in
$$
\bigcup_{ F(\mu_k) - F(\nu_k) =a} \  \left\{ (\mu_k x, |\mu_k| -|\nu_k|, \nu_k x) : \ x
  \in \Omega_G \right\} .
$$
In particular,
$\alpha^F_t(h_a) = e^{i at} h_a$
for all $t \in \mathbb R$.
It follows that
$$
-i Q(f^*\delta(f)) = \sum_{a,b \in A} bQ(h_a^*h_b) 
= \sum_{a \in A} a h_a^*h_a . 
$$
Note that 
$$
(x,k,y) \in \mathcal G, \ s(x) = v, \ y \in
\text{Geo}_v(F,G) \ \Rightarrow \ h_a(x,k,y) = 0
$$ 
when $a < 0$. It follows
that $R(h_a^*h_a) = 0$ when $a < 0$ and hence
$$
R( \sum_{a \in A} a h_a^*h_a) = R(\sum_{a \geq 0} ah_a^*h_a) \ \geq \ 0.
$$  
\end{proof}

\begin{thm}\label{nov14X} The map $\omega \mapsto \omega \circ R
  \circ Q$ is an affine homeomorphism from the state space of
  $C^*_r\left(\mathcal F|_{\text{Geo}_v(F,G)}\right)$ onto the set of
  ground states for the restriction of $\alpha^F$ to the corner $1_vC^*(G)1_v$.
\end{thm}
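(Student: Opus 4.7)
The plan is to verify four claims about $\Phi(\omega) := \omega \circ R \circ Q$: (i) each $\Phi(\omega)$ really is a ground state, (ii) $\Phi$ is injective, (iii) $\Phi$ is surjective, and (iv) $\Phi$ is a weak-$*$ homeomorphism. Affineness and weak-$*$ continuity are immediate. Claim (i) is exactly Lemma \ref{nov12X}. For (ii), $Q$ is a conditional expectation, hence a surjection onto $1_vC^*_r(\mathcal F)1_v$, and $R$ is surjective by construction, so $R \circ Q$ is surjective and $\Phi$ is injective. For (iv), $\text{Geo}_v(F,G)$ is closed in the compact open set $Z(v)$, so $C^*_r(\mathcal F|_{\text{Geo}_v(F,G)})$ is unital with weak-$*$ compact state space; the set of ground states is weak-$*$ closed in the compact state space of $1_vC^*(G)1_v$, and a continuous bijection between compact Hausdorff spaces is automatically a homeomorphism.

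The substantive content is (iii). Fix a ground state $\omega'$ on $1_vC^*(G)1_v$. My first move is to show $\omega'$ is $\alpha^F$-invariant: substituting $a = 1_v + sb$ with $b = b^*$ in the domain of $\delta$ into $-i\omega'(a^*\delta(a)) \geq 0$ and examining the coefficient of $s$ forces $\omega'(\delta(b)) = 0$, hence $\omega' \circ \alpha^F_t = \omega'$ for all $t \in \mathbb R$. Combined with the norm-convergent averaging formula (\ref{intX}), this gives $\omega' = \omega' \circ Q$ on $1_vC^*(G)1_v$.

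Next I would show $\omega'$ vanishes on $\ker R$, which by Lemma \ref{nov11} is the ideal of $1_vC^*_r(\mathcal F)1_v$ associated with the open, $\mathcal F$-invariant set $U = Z(v) \setminus \text{Geo}_v(F,G)$. The key sub-step is the pointwise vanishing $\omega'(1_{Z(\mu)}) = 0$ whenever $\mu$ is ``bad'', meaning $s(\mu) = v$ and some $\mu' \in P_f(G)$ has $s(\mu') = s(\mu)$, $r(\mu') = r(\mu)$, and $F(\mu') < F(\mu)$. Indeed, the characteristic function $V$ of $\{(\mu' x, |\mu'|-|\mu|, \mu x) : x \in Z(r(\mu))\}$ sits in $1_vC_c(\mathcal G)1_v$, is a partial isometry with $V^*V = 1_{Z(\mu)}$, and satisfies $\alpha^F_t(V) = e^{-ict}V$ with $c = F(\mu) - F(\mu') > 0$; applying the ground-state inequality to $V$ yields $-i\omega'(V^*\delta(V)) = -c\,\omega'(1_{Z(\mu)}) \geq 0$, and positivity forces $\omega'(1_{Z(\mu)}) = 0$.

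Since $U$ is the union of bad cylinders $Z(\mu)$, any $f \in C_c(\mathcal F|_U)$ has its compact source-set contained in a finite union $\bigcup_{i=1}^N Z(\mu_i)$ of bad cylinders; the projection $P = 1_{\bigcup_i Z(\mu_i)} \in C(Z(v))$ then satisfies $Pf = f$ and $0 \leq \omega'(P) \leq \sum_i \omega'(1_{Z(\mu_i)}) = 0$, so Cauchy--Schwarz gives $|\omega'(f)|^2 \leq \omega'(P)\,\omega'(f^*f) = 0$. By density of $C_c(\mathcal F|_U)$ in $\ker R$, $\omega' \circ Q$ annihilates $\ker R$ and therefore descends to a state $\omega$ on $C^*_r(\mathcal F|_{\text{Geo}_v(F,G)})$ with $\omega \circ R \circ Q = \omega'$. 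The principal obstacle is precisely this last descent: one must identify $\ker R$ with the ideal associated to the invariant subset (using Lemma \ref{nov11}) and then argue that the apparently weak pointwise condition $\omega'(1_{Z(\mu)}) = 0$ on bad cylinders is enough, via a single-projection Cauchy--Schwarz bound, to annihilate every element of that ideal.
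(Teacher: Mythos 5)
Your argument is essentially the paper's proof: invariance of the ground state gives $\omega' = \omega'\circ Q$; the ground-state inequality applied to a partial isometry of strictly negative frequency sitting over each non-geodesic point forces $\omega'$ to vanish on the complement of $\text{Geo}_v(F,G)$; and a covering argument plus Cauchy--Schwarz then kills the corresponding ideal. The cosmetic differences --- deriving $\alpha^F$-invariance directly instead of citing Proposition 5.3.19 of \cite{BR}, using explicit cylinder-set partial isometries $V$ with $V^*V = 1_{Z(\mu)}$ rather than locally chosen bisection functions, and replacing the approximate-unit step by a single covering projection --- do not change the structure.

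The one place where you assert more than you prove is the phrase ``by density of $C_c(\mathcal F|_U)$ in $\ker R$'' for $U = Z(v)\setminus\text{Geo}_v(F,G)$. Lemma \ref{nov11} only gives that $U$ is open and $\mathcal F$-invariant, which yields the inclusion $\overline{C_c(\mathcal F|_U)} \subseteq \ker R$. The reverse inclusion --- equivalently, exactness in the middle of
$$
0 \to C^*_r\left(\mathcal F|_{U}\right) \to 1_vC^*_r(\mathcal F)1_v \to C^*_r\left(\mathcal F|_{\text{Geo}_v(F,G)}\right) \to 0
$$
at the level of \emph{reduced} $C^*$-algebras --- is not automatic, and it is exactly what you need for ``$\omega'$ vanishes on $\overline{C_c(\mathcal F|_U)}$'' to upgrade to ``$\omega'$ vanishes on $\ker R$'', without which the descended functional lives only on a quotient that surjects onto $C^*_r\left(\mathcal F|_{\text{Geo}_v(F,G)}\right)$ rather than on that algebra itself. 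The paper closes this by invoking Remark 4.10 in \cite{Re2} together with amenability of the reduction, the latter obtained from Theorem 4.2 in \cite{Pa} and the results of \cite{A-DR}. Supply that justification and your proof is complete.
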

\begin{proof} Let $\tau$ be a ground state for the gauge action on
  $1_vC^*(G)1_v$. Since $\tau$ is $\alpha^F$-invariant by Proposition
  5.3.19 in \cite{BR} it follows that $\tau = \tau \circ Q$. Thanks to
  Lemma \ref{nov12X} it
  suffices therefore to show that the restriction of $\tau$ to
  $1_vC^*_r(\mathcal F)1_v$ factorises through $R$, i.e. that $\tau$
  annihilates the kernel of $R$, which is the reduction of $\mathcal F$ to
  $Z(v) \backslash \text{Geo}_v(F,G)$. (This fact follows from Remark
  4.10 in \cite{Re2} provided we know that $\mathcal
F|_{\text{Geo}_v(F,G)}$ is amenable. That this is true follows from
Theorem 4.2 in \cite{Pa} by use of results from \cite{A-DR}.) There is an approximate unit for
  $C^*_r\left(\mathcal F|_{Z(v) \backslash \text{Geo}_v(F,G)}\right)$ in $C_c(Z(v)
  \backslash \text{Geo}_v(F,G))$ and it suffices therefore to show that
  $\tau(f) = 0$ when $f  \in C_c(Z(v)
  \backslash \text{Geo}_v(F,G))$. Let $x \in \supp f$. Since $x \notin
  \text{Geo}_v(F,G)$ there is an element $h \in C_c(\mathcal G)$,
  supported in a bi-section of $\mathcal G$, such
  that $\alpha^F_t(h) = e^{iat} h$ for all $t \in \mathbb R$, where $a <
  0$, and such that $h^*h \in C_c(Z(v) \backslash \text{Geo}_v(F,G) )$ is constant $1$ in a
  neighborhood of $x$. Now
$$
0 \leq -i \tau(h^*\delta(h)) = a\tau(h^*h),
$$
because $\tau$ is a ground state, and it follows that $\tau(h^*h)=0$
since $a < 0$. As $x \in \supp f$ was
arbitrary, a standard partition of unity argument shows that $f$ is a
finite sum, $f = \sum_i g_if$, where $g_i \in  C_c(Z(v)
  \backslash \text{Geo}_v(F,G))$ are non-negative functions in the
  kernel of $\tau$. Hence $\tau(f) = 0$, as required.    
\end{proof}

To formulate what we get from Theorem \ref{nov14X} concerning the
ground states of the gauge action, we say that an element $x \in \Omega_G$ is a \emph{geodesic} when $x \in
V_{\infty}$ or the following holds:
$$
y \in \Omega_G, \ s(y_1) = s(x_1), \ r(y_j) = r(x_i) \ \Rightarrow \ j \geq i .
$$ 
We denote the set of geodesics in $\Omega_G$ by $\text{Geo}(G)$, and
set $\text{Geo}_v(G) = \text{Geo}(G) \cap Z(v)$. Set
$$
\mathcal R = \left\{ (\mu x, 0, \mu'x) \in \mathcal G : \ x \in
  \Omega_G \right\} ,
$$
which is the open sub-groupoid of $\mathcal G$ supporting the fixed
point algebra of the gauge action.

\begin{cor}\label{nov14} The set of ground states of the gauge action on
  $1_vC^*(G)1_v$ is affinely homeomorphic to the state space of
  $C^*_r\left(\mathcal R|_{\text{Geo}_v(G)}\right)$.
\end{cor}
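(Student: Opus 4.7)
The plan is to derive the corollary directly from \refthm{nov14X} applied to the gauge action $F \equiv 1$. Two identifications must be checked: that $\mathcal F = \mathcal R$, and that $\text{Geo}(F,G) = \text{Geo}(G)$ when $F \equiv 1$. Once these are in place, the corollary is simply the specialisation of \refthm{nov14X} to this choice of action.

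For the groupoids, with $F \equiv 1$ the extension to finite paths is $F(\mu_1\cdots\mu_n) = n$, so the defining condition $F(\mu) = F(\mu')$ of $\mathcal F$ becomes $|\mu| = |\mu'|$, i.e.\ the vanishing of the middle coordinate of $(\mu x, |\mu| - |\mu'|, \mu'x) \in \mathcal G$. This is exactly the defining condition of $\mathcal R$.

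For the geodesic sets I would compare the two conditions directly. Elements of $V_\infty$ are built in as geodesics on both sides. For $x$ of positive length, the $F$-geodesic condition with $F \equiv 1$ --- that every prefix $z = x_1\cdots x_i$ of $x$ and every $z' \in P_f(G)$ with $s(z') = s(z)$ and $r(z') = r(z)$ satisfy $|z'| \geq i$ --- is interchangeable with the geodesic condition phrased in terms of $y \in \Omega_G$. Given a short $z'$, one produces a witness $y$ by concatenating $z'$ with the tail of $x$ past step $i$, or, if $|x| = i$ and $r(x_i) \in V_\infty$, by taking $y = z'$ itself; conversely, any geodesy-violating $y$ truncates to an admissible $z' = y_1 \cdots y_j$.

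The point where care is needed, and indeed the only non-formal part of the argument, is the length-zero case $|z'| = 0$ corresponding to a returning loop $s(x_1) = r(x_i)$ in $x$: under the natural convention $r(y_0) := s(y_1)$ this is exactly detected by the geodesic condition at $j = 0$, $i \geq 1$, and hence matches the $F$-geodesic failure coming from the length-zero alternative $z' = s(x_1)$. Once these identifications are in hand, \refthm{nov14X} yields the stated affine homeomorphism between the set of ground states on $1_v C^*(G) 1_v$ and the state space of $C^*_r\!\left(\mathcal R|_{\text{Geo}_v(G)}\right)$.
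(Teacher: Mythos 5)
Your proposal is correct and is exactly the route the paper intends: Corollary~\ref{nov14} is stated as an immediate specialisation of Theorem~\ref{nov14X} to $F\equiv 1$, and the two identifications you verify ($\mathcal F=\mathcal R$ via $F(\mu)=|\mu|$, and $\text{Geo}(F,G)=\text{Geo}(G)$ by passing between a short finite path $z'$ and the witness $y$ obtained by concatenating $z'$ with the tail of $x$, including the length-zero and $Q(G)$ endpoint cases) are precisely what makes that specialisation legitimate. Nothing is missing.
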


\begin{remark}\label{nov33} Following Connes and Marcolli, \cite{CM},
  several authors have singled out the so-called $KMS_{\infty}$-states
  among the ground states for various one-parameter groups. A ground
  state is a $KMS_{\infty}$-state when it is the limit in the weak* topology
  of a sequence of $\beta_n$-KMS states, where $\lim_{n \to \infty}
  \beta_n = \infty$. By combining the description of the ground states
  in Theorem \ref{nov14X} with the observation that KMS states for
  $\alpha^F$ on the corner $1_vC^*(G)1_v$ are trace states on
  $C^*_r(\mathcal F)$, it follows that a ground state can only be a
  $KMS_{\infty}$-state if it is a trace. Presently it is not clear if this is the only condition in general.     
\end{remark}

If $G$ is a strongly connected graph with infinitely many vertexes,
the set $\text{Geo}_v(G)$ is not empty for any vertex $v$. Indeed, if
there is an infinite emitter $v_0$ in $V$, a path in $G$ from $v$ to
$v_0$ of minimal length is an element in $\text{Geo}_v(G) \cap Q(G)$,
and if there are no infinite emitters in $G$, the proof of Lemma 7.5 in
\cite{Th5} will give us an element of $\text{Geo}_v(G) \cap P(G)$.

\begin{example}\label{nov31} The following graph $G$ was considered in
  Example 5.1 in \cite{Th3}.

\bigskip

\begin{tiny}\label{dec5a}

\begin{tikzpicture}[->,thick,x=15mm,y=15mm]
  \begin{scope}
    \node (O) at (0,0) {$1$};
    \node (a1) at (0,1) {};
    \node (a2) at (0,2) {};
    \node (a3) at (0,3) {};
    \node (a4) at (0,4) {};
    \node (am1) at (1,1) {};
    \node (am2) at (1,2) {};
    \node (am3) at (1,3) {};
    \node (am4) at (1,4) {};

    \draw  (O)--(a1);
    \draw (a1)--(a2);
    \draw (a2)--(a3);
    \draw (a3)--(a4);
    \draw (a1)--(am1);
    \draw (a2)--(am2);
    \draw (a3)--(am3);
    \draw (a4)--(am4);
    \draw (am4)--(am3);
    \draw (am3)--(am2);
    \draw (am2)--(am1);
    \draw (am1)--(O);
    \fill 
    (a4) circle (1pt)
    (a4)++(0,5pt) circle (1pt)
    (a4)++(0,10pt) circle (1pt)
    (am4) circle (1pt)
    (am4)++(0,5pt) circle (1pt)
    (am4)++(0,10pt) circle (1pt)
    ;

  \end{scope}
  \begin{scope}[rotate=-120]
    \node (O) at (0,0) {};
    \node (c1) at (0,1) {};
    \node (c2) at (0,2) {};
    \node (c3) at (0,3) {};
    \node (c4) at (0,4) {};
    \node (cm1) at (1,1) {};
    \node (cm2) at (1,2) {};
    \node (cm3) at (1,3) {};
    \node (cm4) at (1,4) {};
    
    \draw  (O)--(c1);
    \draw (c1)--(c2);
    \draw (c2)--(c3);
    \draw (c3)--(c4);
    \draw (c1)--(cm1);
    \draw (c2)--(cm2);
    \draw (c3)--(cm3);
    \draw (c4)--(cm4);
    \draw (cm4)--(cm3);
    \draw (cm3)--(cm2);
    \draw (cm2)--(cm1);
    \draw (cm1)--(O);
    \fill 
    (c4) circle (1pt)
    (c4)++(0,5pt) circle (1pt)
    (c4)++(0,10pt) circle (1pt)
    (cm4) circle (1pt)
    (cm4)++(0,5pt) circle (1pt)
    (cm4)++(0,10pt) circle (1pt)
    ;
  \end{scope}

  \begin{scope}[rotate=135]
    \node (O) at (0,0) {};
    \node (b1) at (0,1) {};
    \node (b2) at (0,2) {};
    \node (b3) at (0,3) {};
    \node (b4) at (0,4) {};
    \node (bm1) at (1,1) {};
    \node (bm2) at (1,2) {};
    \node (bm3) at (1,3) {};
    \node (bm4) at (1,4) {};
    
    \draw  (O)--(b1);
    \draw (b1)--(b2);
    \draw (b2)--(b3);
    \draw (b3)--(b4);
    \draw (b1)--(bm1);
    \draw (b2)--(bm2);
    \draw (b3)--(bm3);
    \draw (b4)--(bm4);
    \draw (bm4)--(bm3);
    \draw (bm3)--(bm2);
    \draw (bm2)--(bm1);
    \draw (bm1)--(O);
    \fill 
    (b4) circle (1pt)
    (b4)++(0,5pt) circle (1pt)
    (b4)++(0,10pt) circle (1pt)
    (bm4) circle (1pt)
    (bm4)++(0,5pt) circle (1pt)
    (bm4)++(0,10pt) circle (1pt)
    ;
  \end{scope}
\end{tikzpicture}
\end{tiny}

\bigskip

Let $\alpha$ be the real root of the polynomial $x^3 -x-3$. As shown
in \cite{Th3} there are three extremal rays of $\beta$-KMS weights for
the gauge action on $C^*(G)$ when $\beta > \log \alpha$ and a unique
ray of
$\log \alpha$-KMS weights. In fact, the last ray consists of finite
$\log \alpha$-KMS weights that may be normalised to a $\log
\alpha$-KMS state. For $\beta > \log \alpha$ the weights are not
finite. The factor type of the KMS weights were not determined in
\cite{Th3}, but it follows from Theorem \ref{nov9} that the factor type of
the $\log \alpha$-KMS state is $III_{\alpha^{-1}}$ while the other extremal $\beta$-KMS weights are all of
type $I_{\infty}$ by Theorem \ref{nov25}.

Let $v$ be the central vertex in $G$; the one labelled $1$. Then $ \text{Geo}_v(G)
$ consists of three easily identified paths in $G$ and
$$
C^*_r\left(\mathcal R|_{\text{Geo}_v(G)}\right) \ \simeq \ \mathbb C
\oplus \mathbb C \oplus \mathbb C.
$$
By Corollary \ref{nov14} the set of ground states for the gauge action
on $1_vC^*(G)1_v$ is a triangle. 

\end{example}

\begin{example}\label{nov34} Consider the gauge action on $C^*(G)$
  when $G$ is the graph (\ref{G5}).  A vector $\psi_v$ indexed by the
  vertexes in $G$ with $\psi_{t_1} =1$ is harmonic for $A(\beta)$ when
  $\psi_v > 0$ for all $v\in V$ and
\begin{enumerate}
\item[1)] $\psi_{d_n} = e^{-n\beta}, \ n =1,2,3,\cdots$,
\item[2)] $\psi_{t_2} = \frac{e^{\beta}}{2}$,
\item[3)] $\psi_{t_{k+1}} = \frac{1}{2} \left(e^{\beta} \psi_{t_k} -
    e^{-k\beta}\right), \ k \geq 2$.   
\end{enumerate}
It follows from this (and \cite{Th5}) that there is a $\beta$-KMS
weight $\psi_{\beta}$ for the gauge action on $C^*(G)$ if and only
$\beta \geq \frac{1}{2} \log
\alpha$, where $\alpha \sim 1.61$ is the positive root of the polynomial $x^2 -2x
-2$, and that it is unique up to multiplication by
scalars. In addition, it follows from \cite{Th5} that only the $\frac{\log \alpha}{2}$-KMS weight can arise from a
conservative measure on $P(G)$, and by using the criterion from
\cite{V} one checks that it does. It follows then from Theorem \ref{nov25} that
$\psi_{\beta}$ is of factor type $II_{\infty}$ for all $\beta >
\frac{\log \alpha}{2}$ while the factor type of $\psi_{\beta}$ is
$III_{\alpha^{-1}}$ when $\beta = \frac{\log \alpha}{2}$ by Theorem \ref{nov9}.

Concerning the ground states it follows from Corollary \ref{nov14}
that the ground states for the gauge action on the corner
$1_{t_1}C^*(G)1_{t_1}$ are parametrised by the state space of the UHF algebra of type
$2^{\infty}$, also known as the CAR algebra. For comparison, let
$\alpha^F$ be the generalized gauge action on $C^*(G)$ considered in
Example \ref{dec5b}. The restriction of $\alpha^F$ to the same corner $1_{t_1}C^*(G)1_{t_1}$
has only one ground state by Theorem \ref{nov14X}.



\end{example}

\begin{example}\label{dec5} By using amalgamations of graphs in much
  the same way as in \cite{Th5}, it is possible to build examples
  with other kinds of variation of the factor types. For example we
  can construct cases where there are extremal KMS weights of
  different factor types for the same value of $\beta$. For this consider the disjoint union of the graph (\ref{G5}) from Example \ref{dec5b} and the graph (\ref{dec5a})
  from Example \ref{nov31}, and identify the vertex $t_1$ from
  (\ref{G5}) with the vertex $1$ from (\ref{dec5a}) to get a strongly
  connected graph $H$. Consider the
  generalised gauge action on $C^*(H)$ obtained by taking $F(e)$ to be
   the value it
  has in Example \ref{dec5b} when $e$ comes from (\ref{G5}) and to be
  $1$ when $e$ comes from (\ref{dec5a}). For the corresponding action
  $\alpha^F$ on $C^*(H)$ there are
  $\beta$-KMS weights if and only if $\beta \geq \beta_0$ where
  $\beta_0$ is the positive real number for which
$$
\frac{3 e^{-\beta_0}}{e^{2\beta_0} -1} + \frac{e^{-\beta_0 a_1} + e^{-\beta_0
    a_2}}{1 - e^{-\beta_0 a_1} -e^{-\beta_0 a_2}} = 1.
$$  
When $\beta > \beta_0$ there are four extremal rays of $\beta$-KMS weights,
three of which are of type $I_{\infty}$ and one of type
$III_{\lambda}$ where $\lambda = e^{-\beta (a_1-a_2)}$. There is only
one ray of $\beta_0$-KMS weights, and by using the results of
Vere-Jones as in Example \ref{dec5b} we can conclude from \cite{Th5}
that the corresponding measure on $P(H)$ is conservative. We can then use Theorem \ref{nov9}
to see that the $\Gamma$-invariant of the corresponding factor is 
$$
\overline{\beta_0 a_1 \mathbb Z + \beta_0 a_2 \mathbb Z + \beta_0
  \mathbb Z} .
$$
Its type depends therefore on the two parameters $a_1,a_2$, and can be determined from 8.15.11 of \cite{Pe}.
\end{example}

\end{document}